\numberwithin{equation}{section}
\newtheorem{theorem}{Theorem}[section]
\newtheorem{corollary}[theorem]{Corollary}
\newtheorem{lemma}[theorem]{Lemma}
\newtheorem{proposition}[theorem]{Proposition}
\newenvironment{proof}[1][Proof]{\textbf{#1.} }{\ \rule{0.5em}{0.5em}}
\begin{document}

\title{On Girsanov's transform for backward stochastic differential equations%
}
\date{}
\author{{\small {\textsc{By G. Liang{,} {A. Lionnet }and {Z. Qian}}}} \\
\emph{{\small {Oxford-Man Institute, Oxford University, England}}}}
\maketitle

\leftskip1truecm \rightskip1truecm \noindent {\textbf{Abstract.}} By using a
simple observation that the density processes appearing in It\^{o}'s
martingale representation theorem are invariant under the change of
measures, we establish a non-linear version of the Cameron-Martin formula
for solutions of a class of systems of quasi-linear parabolic equations with
non-linear terms of quadratic growth. We also construct a local stochastic
flow and establish a Bismut type formula for such system of quasi-linear
PDEs. Gradient estimates are obtained in terms of the probability
representation of the solution. Another interesting aspect indicated in the
paper is the connection between the non-linear Cameron-Martin formula and a
class of forward-backward stochastic differential equations (FBSDEs).

\leftskip0truecm \rightskip0truecm

\vskip4truecm

\noindent \textit{Key words.} Brownian motion, backward SDE, SDE, non-linear
equations

\vskip0.5truecm

\noindent \textit{AMS Classification.} 60H10, 60H30, 60J45

\newpage

\section{Introduction}

The goal of the article is to present a unified approach for Girsanov's
techniques of changing probability measures used in the recent literature on
backward stochastic differential equations (BSDE). The framework we
formulate has an advantage allowing us to bring together seemingly different
sorts of results on BSDE, forward-backward stochastic differential equations
(FBSDE), and function equations on a probability space.

Consider the following system of quasi-linear parabolic equations%
\begin{equation}
\frac{\partial }{\partial t}u^{i}=\frac{1}{2}\Delta
u^{i}+\sum_{j=1}^{d}f^{j}(u,\nabla u)\frac{\partial u^{i}}{\partial x^{j}}%
\text{ \ \ \ \ in }\mathbb{R}^{d}  \label{non-lk1}
\end{equation}%
$i=1,\cdots ,m$, where the drift vector field involves a solution $u$ and
its total derivative $\nabla u=(\frac{\partial u^{i}}{\partial x^{j}})$
through $f=(f^{j})_{j\leq d}$. In the interesting cases, $f^{j}:\mathbb{R}%
^{d}\times \mathbb{R}^{m\times d}\rightarrow \mathbb{R}$ ($j=1,\cdots ,d$)
are Lipschitz continuous but unbounded, and therefore the non-linear term
(often called the \emph{convection} term) appearing in (\ref{non-lk1}) is of
quadratic growth. This is a kind of non-linear feature which appears in many
physical PDEs, see for example \cite{MR833742} and \cite{MR748308}. If $%
m=d=3 $ and $f^{j}(u)=u^{j}$, then (\ref{non-lk1}) is a modification of the
Navier-Stokes equations with the pressure term and the divergence-free
condition dropped altogether, while the same non-linear convection term has
been retained. This kind of PDEs has been used as simplified models for
phenomena such as turbulence flows. Due to the special structure of the
system (\ref{non-lk1}), the maximum principle applies to $|u(x,t)|^{2}$,
thus a bounded solution exists as long as the initial data $u(x,0)$ is
regular and bounded, which makes a distinctive difference from the
Navier-Stokes equations. According to Theorem 7.1 on page 596 in \cite%
{MR0241822}, if the initial data $u_{0}$ is smooth and bounded with bounded
derivatives, then a bounded, smooth solution $u$ to the initial value
problem of (\ref{non-lk1}) exists for all time. Our main interest in this
article is to establish probabilistic representations for the solution $u$
by applying Girsanov's theorem to BSDEs.

To this end, it will be a good idea to look at Peng's non-linear Feymann-Kac
formula (see \cite{MR1149116}) for general quasi-linear parabolic equations
with Lipschitz non-linear term. The main idea in \cite{MR1037747} and \cite%
{MR1149116} can be described as following. Thanks to It\^{o}'s calculus,
Brownian motion $B=\left( B^{1},\cdots ,B^{d}\right) $ may be considered as
\textquotedblleft coordinates\textquotedblright\ on the space of continuous
paths equipped with the Wiener measure, and it is a much tested idea that
one may \textquotedblleft read out\textquotedblright\ solutions of
quasi-linear partial differential equations in terms of $B$. Suppose $%
u(x,t)=(u^{1}(x,t),\cdots ,u^{m}(x,t))$ is a sufficiently smooth solution to
the Cauchy problem of the system of quasi-linear parabolic equations 
\begin{equation*}
\left( \frac{1}{2}\Delta -\frac{\partial }{\partial t}\right)
u^{i}+h^{i}(u,\nabla u)=0\text{ \ \ \ \ in }\boldsymbol{R}^{d}\times \lbrack
0,\infty )
\end{equation*}%
with the initial data $u^{i}(x,0)=u_{0}^{i}(x)$. Applying It\^{o}'s formula
to $Y_{t}^{i}=u^{i}(x+B_{t},T-t)$ (and set $Z_{t}^{i,j}=\frac{\partial u^{i}%
}{\partial x^{j}}(x+B_{t},T-t)$) to obtain 
\begin{eqnarray*}
Y_{T}^{i}-Y_{t}^{i} &=&\sum_{j=1}^{d}\int_{t}^{T}\frac{\partial u^{i}}{%
\partial x^{j}}(x+B_{s},T-s)dB_{s}^{j} \\
&&+\int_{t}^{T}\left( \frac{1}{2}\Delta -\frac{\partial }{\partial s}\right)
u^{i}(x+B_{s},T-s)ds \\
&=&\sum_{j=1}^{d}\int_{t}^{T}Z_{s}^{i,j}dB_{s}^{j}-%
\int_{t}^{T}h^{i}(Y_{s},Z_{s})ds
\end{eqnarray*}%
for $t\in \lbrack 0,T]$. The pair $(Y,Z)$ is a solution to the stochastic
differential equations%
\begin{equation}
dY_{t}^{i}=-h^{i}(Y_{t},Z_{t})dt+\sum_{j=1}^{d}Z_{t}^{i,j}dB_{t}^{j}
\label{bsdf1}
\end{equation}%
on $(\Omega ,\mathcal{F},\mathcal{F}_{t},\boldsymbol{P})$ with the terminal
data $Y_{T}^{i}=u_{0}^{i}(x+B_{T})$.

Peng \cite{MR1149116} and Pardoux-Peng \cite{MR1037747} made two crucial
observations: firstly the process $Z_{t}$ may be recovered from the It\^{o}
representation of the martingale $S_{t}^{i}=S_{0}^{i}+\int_{0}^{t}%
\sum_{j=1}^{d}Z_{s}^{i,j}dB_{s}^{j}$. Secondly $S$ is indeed the martingale
part of the continuous semimartingale $Y$, therefore (\ref{bsdf1}) is a
closed system and may be solved by specifying a terminal value $Y_{T}$. It
was proved in Pardoux-Peng \cite{MR1037747} that if $h^{i}$ are global
Lipschitz continuous, then (\ref{bsdf1})\ may be solved as long as $Y_{T}\in
L^{2}(\Omega ,\mathcal{F}_{T},\boldsymbol{P})$.

In the case that $h^{i}(y,z)$ has a special form such as $%
\sum_{j=1}^{d}f^{j}(y,z)z^{ij}$, one is able to solve (\ref{bsdf1}) by
firstly solve the trivial BSDE $dY_{t}^{i}=%
\sum_{j=1}^{d}Z_{t}^{i,j}dB_{t}^{j}$ then change the probability measure,
even for non-Lipschitz non-linear term $h$ as long as $f$ is Lipschitz
continuous. We are thus able to extend the Cameron-Martin formula to a class
of \emph{systems} of quasi-linear parabolic equations with quadratic growth.
The non-linear version of the Cameron-Martin formula, which may be
considered as our contribution of this article, is of independent interest.

Suppose that the initial data $u_{0}$ is Lipschitz continuous and bounded. \
Let $(\mathcal{F}_{t})_{t\geq 0}$ be Brownian filtration, i.e. the
filtration over $\boldsymbol{W}^{d}$ of all continuous paths in $\boldsymbol{%
R}^{d}$ generated by the coordinate process $\{B_{t}:t\geq 0\}$, augmented
by the Wiener measure. Let $T>0$ and $x\in \boldsymbol{R}^{d}$ be fixed but
arbitrary. For each $\xi =(\xi ^{1},\cdots ,\xi ^{m})$ where $\xi ^{i}\in
L^{\infty }(\boldsymbol{W}^{d},\mathcal{F}_{T},\boldsymbol{P}^{x})$. Define 
\begin{equation*}
\tilde{B}(\xi )_{t}=B_{t}-\int_{0}^{t}f(Y(\xi )_{s},Z(\xi )_{s})ds
\end{equation*}%
where $Y(\xi )_{t}=E^{\boldsymbol{P}}(\xi |\mathcal{F}_{t})$ are bounded
martingales and $Z(\xi )=(Z(\xi )^{i,j})$ are determined by It\^{o}'s
martingale representation:%
\begin{equation*}
\xi ^{i}-E^{\boldsymbol{P}}(\xi ^{i}|\mathcal{F}_{0})=\sum_{j=1}^{d}%
\int_{0}^{T}Z(\xi )_{t}^{i,j}dB_{t}^{j}\text{.}
\end{equation*}%
We prove that there is a unique $\xi \in L^{\infty }(\boldsymbol{W}^{d},%
\mathcal{F}_{T},\boldsymbol{P}^{x})$ such that $\xi =u_{0}(\tilde{B}(\xi
)_{T})$, and $u(x,T)=E^{\boldsymbol{P}}(Y(\xi )_{0})$, which can be
considered as the non-linear version of the Cameron-Martin formula. More
information about the solution $u$ may be obtained as we can represent the
derivative $\frac{\partial u^{i}}{\partial x^{j}}$ in terms of the process $%
(Z(\xi )_{t}^{i,j})_{t\leq T}$, see Theorem \ref{main-t31} below. The
non-linear version of the Cameron-Martin formula may be reformulated in
terms of FBSDE as well, see (\ref{fbde1}) and\ Corollary \ref{coro-h2}.

The main reason we are interested in representations of solutions to
physical PDEs in terms of Brownian motion or in general in terms of It\^{o}%
's diffusions, lies in the fact that it is then possible to employ
probabilistic methods such as It\^{o}'s calculus, Malliavin's calculus of
variations and path integration method to the study of non-linear PDEs. We
demonstrate this point by deriving explicit gradient estimates for solutions
of a class of systems of quasi-linear parabolic equations.

Finally we would like to point out that the type of BSDEs such as (\ref%
{bsdf1}) with quadratic growth non-linear terms has been well studied in
Kobylanski \cite{MR1782267}. Her work has been extended and generalized
substantially in Briand and Hu \cite{MR2391164} and \cite{MR2257138}. BSDEs
with quadratic growth driven by martingales are solved by Morlais \cite%
{MR2465489} and Tevzadze \cite{MR2389055}. However their methods can be
applied to scalar BSDEs, not systems in general.

\section{Girsanov's theorem, martingale representation and BSDE}

Let $B=(B^{1},\cdots ,B^{d})$ be a Brownian motion in $\boldsymbol{R}^{d}$
started at $0$ on a complete probability space $(\Omega ,\mathcal{F},%
\boldsymbol{P})$, $(\mathcal{F}_{t})_{t\geq 0}$ be the filtration generated
by $B$ augmented by probability zero sets in $\mathcal{F}$ and $\mathcal{F}%
_{\infty }=\sigma \{\mathcal{F}_{t}:t\geq 0\}$. Since we are interested in $%
\mathcal{F}_{\infty }$-measurable random variables only, for simplicity, we
assume that $\mathcal{F}=\mathcal{F}_{\infty }$. According to It\^{o}'s
martingale representation theorem, any martingale $S$ on $(\Omega ,\mathcal{F%
},\mathcal{F}_{t},\boldsymbol{P})$ is continuous and has a unique
representation in terms of It\^{o}'s integration%
\begin{equation*}
S_{t}-S_{0}=\sum_{j=1}^{d}\int_{0}^{t}D_{B}(S)_{s}^{j}dB_{s}^{j}
\end{equation*}%
where $D_{B}(S)^{j}$ are predictable processes, called the density processes
of $S$ with respect to Brownian motion $B$. Since we will deal with several
equivalent measures on $(\Omega ,\mathcal{F})$ at the same time, it is
desirable to have labels associated with notations which involve a
probability measure. We will follow this convention if confusions may arise.
Therefore, $E^{\boldsymbol{P}}$ and $E^{\boldsymbol{P}}\left\{ \cdot |%
\mathcal{F}_{t}\right\} $ denote the expectation and conditional expectation
with respect to $\boldsymbol{P}$ respectively.

Let $\boldsymbol{Q}$ be a probability measure on $(\Omega ,\mathcal{F})$
equivalent to $\boldsymbol{P}$, whose Radon-Nikodym's derivative with
respect to $\boldsymbol{P}$ restricted on $\mathcal{F}_{t}$ is denoted by $%
R_{t}$, that is, $\left. \frac{d\boldsymbol{Q}}{d\boldsymbol{P}}\right\vert
_{\mathcal{F}_{t}}=R_{t}$ for $t\geq 0$. Then $R$ is a positive martingale
on $(\Omega ,\mathcal{F},\mathcal{F}_{t},\boldsymbol{P})$ with $E^{%
\boldsymbol{P}}(R_{t})=1$. Girsanov's theorem \cite{MR0133152} establishes a
correspondence between local $\boldsymbol{P}$-martingales and local $%
\boldsymbol{Q}$-martingales. If $X=(X_{t})_{t\geq 0}$ is a local martingale
under probability $\boldsymbol{P}$, then $X$ is a continuous semi-martingale
under $\boldsymbol{Q}$ and $\tilde{X}_{t}=X_{t}-\int_{0}^{t}\frac{1}{R_{s}}%
d\langle X,R\rangle _{s}$ is a local martingale under $\boldsymbol{Q}$,
where the bracket process $\langle X,R\rangle _{t}$ is defined under the
probability $\boldsymbol{P}$, which is however invariant under the change of
equivalent probability measure.

It is convenient to formulate the Girsanov's transform in terms of
exponential martingales. The exponential martingale $\mathcal{E}(N)_{t}=\exp
\{N_{t}-\frac{1}{2}\langle N\rangle _{t}\}$ of a continuous local martingale 
$N$ (under $\boldsymbol{P}$) is the unique solution to the stochastic
exponential equation 
\begin{equation*}
\mathcal{E}(N)_{t}=1+\int_{0}^{t}\mathcal{E}(N)_{s}dN_{s}\text{.}
\end{equation*}%
Up to an initial data $R_{0}$, the Radon-Nikodym derivative $%
R=(R_{t})_{t\geq 0}$ of $\boldsymbol{Q}$ with respect to the measure $%
\boldsymbol{P}$ must be an exponential martingale of some continuous local
martingale $N$, so that $R_{t}=R_{0}\exp \{N_{t}-\frac{1}{2}\langle N\rangle
_{t}\}$. Then $\tilde{X}=X-\langle X,N\rangle $ for any $\boldsymbol{P}$%
-local martingale $X$.

According to L\'{e}vy's theorem, $\tilde{B}=(\tilde{B}^{1},\cdots ,\tilde{B}%
^{d})$ is a Brownian motion under $\boldsymbol{Q}$ which is $(\mathcal{F}%
_{t})_{t\geq 0}$-adapted, but the natural filtration $(\mathcal{\tilde{F}}%
_{t})_{t\geq 0}$ of $\tilde{B}$ may not coincide with $(\mathcal{F}%
_{t})_{t\geq 0}$.\ It can happen that $\mathcal{\tilde{F}}_{t}$ is strictly
smaller than $\mathcal{F}_{t}$ for some $t$. But, nevertheless any $(%
\mathcal{F}_{t})$-martingale under $\boldsymbol{Q}$ may be represented as It%
\^{o}'s integral of $(\mathcal{F}_{t})$-predictable processes against $%
\tilde{B}$. The starting point of our approach is the following elementary
fact.

\begin{lemma}
\label{thr-1}For any local $\boldsymbol{P}$-martingale $X$ (with respect to
the filtration $(\mathcal{F}_{t})_{t\geq 0}$) $D_{B}(X)=D_{\tilde{B}}(\tilde{%
X})$. That is, the density process appearing in It\^{o}'s martingale
representation is invariant under the change of equivalent probability
measures.
\end{lemma}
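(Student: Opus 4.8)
The plan is to unravel the definitions on both sides and show they produce the same It\^{o} integrand.

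\textbf{Setup and strategy.} Let $X$ be a local $\boldsymbol{P}$-martingale. By It\^{o}'s representation under $\boldsymbol{P}$ we have $X_{t}=X_{0}+\sum_{j=1}^{d}\int_{0}^{t}D_{B}(X)_{s}^{j}dB_{s}^{j}$, which identifies $D_{B}(X)$ as the unique predictable process with $d\langle X,B^{j}\rangle_{t}=D_{B}(X)_{t}^{j}\,dt$. The analogous characterization holds for $\tilde{X}$ under $\boldsymbol{Q}$: since $\tilde{B}$ is a $\boldsymbol{Q}$-Brownian motion and every $(\mathcal{F}_{t})$ $\boldsymbol{Q}$-martingale is an It\^{o} integral against $\tilde{B}$ (as recalled in the paragraph preceding the lemma), $D_{\tilde{B}}(\tilde{X})$ is the unique predictable process with $d\langle \tilde{X},\tilde{B}^{j}\rangle_{t}^{\boldsymbol{Q}}=D_{\tilde{B}}(\tilde{X})_{t}^{j}\,dt$. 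So it suffices to prove $\langle X,B^{j}\rangle=\langle \tilde{X},\tilde{B}^{j}\rangle$ for each $j$.

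\textbf{Key steps.} First I would invoke the central fact quoted in the excerpt: the quadratic covariation bracket of two continuous semimartingales is a pathwise/local object, hence invariant under passage to an equivalent measure. Thus $\langle \tilde{X},\tilde{B}^{j}\rangle$ computed under $\boldsymbol{Q}$ equals the same bracket computed under $\boldsymbol{P}$, and moreover brackets are unchanged when one subtracts a finite-variation process. Writing $R=R_{0}\mathcal{E}(N)$ for the density and using Girsanov in the form given above, $\tilde{X}=X-\langle X,N\rangle$ and $\tilde{B}^{j}=B^{j}-\langle B^{j},N\rangle$; both correction terms are of finite variation. Therefore
\begin{equation*}
\langle \tilde{X},\tilde{B}^{j}\rangle=\langle X-\langle X,N\rangle,\,B^{j}-\langle B^{j},N\rangle\rangle=\langle X,B^{j}\rangle,
\end{equation*}
since the bracket annihilates the finite-variation parts. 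Combining this with the two characterizations above gives $D_{\tilde{B}}(\tilde{X})^{j}\,dt=d\langle \tilde{X},\tilde{B}^{j}\rangle=d\langle X,B^{j}\rangle=D_{B}(X)^{j}\,dt$, and by uniqueness of the predictable density (up to $dt\times d\boldsymbol{P}$-null sets, and $\boldsymbol{Q}\sim\boldsymbol{P}$) we conclude $D_{B}(X)=D_{\tilde{B}}(\tilde{X})$.

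\textbf{Main obstacle.} The computation itself is routine; the one point deserving care is the passage between the two probability measures in the martingale-representation step. One must be sure that $\tilde{X}$, a priori only a $\boldsymbol{Q}$-local martingale for the possibly larger filtration $(\mathcal{F}_{t})$ rather than the natural filtration of $\tilde{B}$, still admits an It\^{o} representation against $\tilde{B}$ over $(\mathcal{F}_{t})$; this is exactly the enlarged-filtration representation property recalled in the paragraph before the lemma, so it may simply be cited. A secondary subtlety is localization: for genuinely local (not true) martingales one should run the argument along a common reducing sequence of stopping times and then pass to the limit, the stopping times being the same under $\boldsymbol{P}$ and $\boldsymbol{Q}$ because the two measures are equivalent. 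With these two points noted, the proof is complete.
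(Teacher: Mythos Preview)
Your proof is correct and is precisely the argument the paper has in mind: the paper does not give a written proof at all, stating only that ``the lemma follows directly from the definition of density processes and the Girsanov's theorem, so its proof is left to the reader.'' Your unwinding of the two density processes via the bracket characterization, together with the invariance of quadratic covariation under equivalent changes of measure and under subtraction of finite-variation terms, is exactly that direct argument.
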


The lemma follows directly from the definition of density processes and the
Girsanov's theorem, so its proof is left to the reader. With the help of
this simple fact, we can deal with the question of how a backward stochastic
differential equation is transformed under change of equivalent probability
measures.

\begin{theorem}
\label{thr-4}Suppose that $g$ is global Lipschitz continuous, $\xi \in
L^{2}(\Omega ,\mathcal{F}_{T},P)$, and $(Y,S)$ is the unique solution pair
to BSDE 
\begin{equation}
dY_{t}=-g(t,Y_{t},D_{B}(S)_{t})dt+dS_{t}\text{, }Y_{T}=\xi   \label{bs-d1}
\end{equation}%
on $(\Omega ,\mathcal{F},\mathcal{F}_{t},\boldsymbol{P})$. Let $\tilde{S}%
=S-\langle S,N\rangle $. Then $(Y,\tilde{S})$ solves 
\begin{equation}
dY_{t}=-g(t,Y_{t},D_{\tilde{B}}(\tilde{S})_{t})dt+\sum_{j=1}^{d}D_{\tilde{B}%
}(\tilde{S})_{t}^{j}D_{B}(N)_{t}^{j}dt+d\tilde{S}_{t}  \label{bs-d2}
\end{equation}%
on $(\Omega ,\mathcal{F},\mathcal{F}_{t},\boldsymbol{Q})$, $Y_{T}=\xi $.
\end{theorem}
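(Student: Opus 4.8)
The plan is to put (\ref{bs-d1}) into integral form, push it through Girsanov's theorem, and then read off the transformed coefficients using Lemma \ref{thr-1}. First I would record that, by definition of the solution pair, $S$ is a $\boldsymbol{P}$-martingale and $Y$ is the continuous $\boldsymbol{P}$-semimartingale
\[
Y_t=\xi+\int_t^T g(s,Y_s,D_B(S)_s)\,ds-(S_T-S_t).
\]
Since $\boldsymbol{Q}$ is equivalent to $\boldsymbol{P}$, the process $Y$ stays a continuous semimartingale under $\boldsymbol{Q}$, and up to the constant $R_0$ the density $R$ is an exponential martingale $\mathcal{E}(N)$ of a continuous local martingale $N$. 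By Girsanov's theorem $\tilde S:=S-\langle S,N\rangle$ is a $\boldsymbol{Q}$-local martingale, so substituting $S=\tilde S+\langle S,N\rangle$ into the equation gives
\[
dY_t=-g(t,Y_t,D_B(S)_t)\,dt+d\langle S,N\rangle_t+d\tilde S_t,\qquad Y_T=\xi.
\]

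Next I would compute the bracket term explicitly. Writing the It\^o representations $S_t-S_0=\sum_j\int_0^t D_B(S)_s^j\,dB_s^j$ and $N_t-N_0=\sum_j\int_0^t D_B(N)_s^j\,dB_s^j$ and using $\langle B^i,B^j\rangle_t=\delta_{ij}t$, one obtains $\langle S,N\rangle_t=\sum_{j=1}^d\int_0^t D_B(S)_s^j D_B(N)_s^j\,ds$. At this point Lemma \ref{thr-1} enters twice: it gives $D_B(S)=D_{\tilde B}(\tilde S)$, so on one hand $g(t,Y_t,D_B(S)_t)=g(t,Y_t,D_{\tilde B}(\tilde S)_t)$, and on the other hand $d\langle S,N\rangle_t=\sum_{j=1}^d D_{\tilde B}(\tilde S)_t^j D_B(N)_t^j\,dt$. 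Substituting both identities into the displayed equation produces exactly (\ref{bs-d2}), with the terminal condition $Y_T=\xi$ unchanged. I would also note that, by uniqueness of the semimartingale decomposition of the continuous semimartingale $Y$ under $\boldsymbol{Q}$, the process $\tilde S$ is genuinely the martingale part of $Y$, so (\ref{bs-d2}) really is a BSDE driven by the $\boldsymbol{Q}$-Brownian motion $\tilde B$.

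There is no deep difficulty here — the content is entirely the invariance recorded in Lemma \ref{thr-1}, which is precisely what makes the new driver close up in terms of $D_{\tilde B}(\tilde S)$. The only point deserving a word of care, and the mild obstacle, is integrability/well-posedness of the transformed equation: one should check that $\tilde S$ is a true $\boldsymbol{Q}$-martingale and that the new driver $g(t,y,z)+\sum_j z^j D_B(N)_t^j$ still makes $(Y,\tilde S)$ a solution in the intended sense, keeping in mind that this driver need no longer be Lipschitz because $D_B(N)$ may be unbounded — indeed this is exactly the mechanism the later sections exploit. For the statement as given it therefore suffices to establish the formal identity above.
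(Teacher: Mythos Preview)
Your proposal is correct and follows essentially the same route as the paper's proof: write the BSDE in integral form, apply Girsanov to replace $S$ by $\tilde S+\langle S,N\rangle$, compute $\langle S,N\rangle$ via the It\^o representations of $S$ and $N$, and then invoke Lemma~\ref{thr-1} to rewrite $D_B(S)$ as $D_{\tilde B}(\tilde S)$. Your added remarks on the semimartingale decomposition under $\boldsymbol{Q}$ and on the possible loss of Lipschitz continuity of the transformed driver are accurate side observations but go slightly beyond what the paper records.
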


\begin{proof}
Since $(Y,S)$ is a solution to (\ref{bs-d1}), $Y^{i}$ are continuous
semimartingale with martingale parts $S^{j}$. By Lemma \ref{thr-1}, $%
D_{B}(S^{i})=D_{\tilde{B}}(\tilde{S}^{i})$, so that 
\begin{equation*}
\tilde{S}_{t}^{i}-\tilde{S}_{0}^{i}=\sum_{j=1}^{d}\int_{0}^{t}D_{\tilde{B}}(%
\tilde{S}^{i})_{s}^{j}d\tilde{B}_{s}^{j}=\sum_{j=1}^{d}%
\int_{0}^{t}D_{B}(S^{i})_{s}^{j}d\tilde{B}_{s}^{j}
\end{equation*}%
and 
\begin{equation*}
\tilde{S}_{t}^{i}=S_{t}^{i}-\sum_{j=1}^{d}\int_{0}^{t}D_{\tilde{B}}(\tilde{S}%
^{i})_{s}^{j}D_{B}(N)_{s}^{j}ds\text{.}
\end{equation*}%
Therefore 
\begin{eqnarray}
Y_{T}^{i}-Y_{t}^{i} &=&-\int_{t}^{T}g^{i}(s,Y_{s},D_{\tilde{B}}(\tilde{S}%
)_{s})ds  \notag \\
&&+\sum_{j=1}^{d}\int_{t}^{T}D_{\tilde{B}}(\tilde{S}%
^{i})_{s}^{j}D_{B}(N)_{s}^{j}ds+\tilde{S}_{T}^{i}-\tilde{S}_{t}^{i}
\label{n-bs-1}
\end{eqnarray}%
which is valid under the probability $\boldsymbol{Q}$. That is, the pair $(Y,%
\tilde{S})$ solves BSDE (\ref{bs-d2}) on $(\Omega ,\mathcal{F},\mathcal{F}%
_{t},\boldsymbol{Q})$, with terminal values $Y_{T}^{i}=\xi ^{i}$.
\end{proof}

The most interesting case is the following special choice of $N$.

\begin{corollary}
\label{coro-h01} Under the same assumptions as in the previous theorem, and%
\begin{equation*}
N_{t}=\sum_{j=1}^{d}\int_{0}^{t}f^{j}(Y_{s},D_{B}(S)_{s})dB_{s}^{j}
\end{equation*}%
where $f:\mathbb{R}^{m}\times \mathbb{R}^{md}\rightarrow \mathbb{R}^{d}$ is
Borel measurable. Then $(Y,\tilde{S})$ solves the BSDE%
\begin{equation*}
dY_{t}=-g(t,Y_{t},D_{\tilde{B}}(\tilde{S})_{t})dt+\sum_{j=1}^{d}D_{\tilde{B}%
}(\tilde{S})_{t}^{j}f^{j}(Y_{t},D_{\tilde{B}}(\tilde{S})_{t})dt+d\tilde{S}%
_{t}
\end{equation*}%
with the terminal value $Y_{T}=\xi $ on $(\Omega ,\mathcal{F},\mathcal{F}%
_{t},\boldsymbol{Q})$.
\end{corollary}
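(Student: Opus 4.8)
The plan is to treat the Corollary as a pure specialization of Theorem \ref{thr-4}: once the density process of the prescribed martingale $N$ is identified, the assertion is immediate, with Lemma \ref{thr-1} supplying the final cosmetic rewriting. First I would verify that $N_{t}=\sum_{j=1}^{d}\int_{0}^{t}f^{j}(Y_{s},D_{B}(S)_{s})\,dB_{s}^{j}$ is a legitimate continuous local $\boldsymbol{P}$-martingale: the integrands $f^{j}(Y_{s},D_{B}(S)_{s})$ are predictable because $f$ is Borel and $(Y,D_{B}(S))$ is adapted/predictable, and in the cases of interest $\int_{0}^{T}|f^{j}(Y_{s},D_{B}(S)_{s})|^{2}\,ds<\infty$ a.s., so the stochastic integral is defined; for the measure $\boldsymbol{Q}$ of Theorem \ref{thr-4} (with $R_{t}=\mathcal{E}(N)_{t}$) to be a probability measure equivalent to $\boldsymbol{P}$ one also needs $\mathcal{E}(N)$ to be a true martingale on $[0,T]$, which is the standing hypothesis carried over from Theorem \ref{thr-4} and holds automatically, via Novikov's criterion, in the bounded setting relevant to the PDE application. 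Because $N$ is already displayed as an It\^{o} integral against $B$, uniqueness in It\^{o}'s martingale representation theorem gives $D_{B}(N)_{t}^{j}=f^{j}(Y_{t},D_{B}(S)_{t})$ for $j=1,\dots,d$.

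Next I would apply Theorem \ref{thr-4} verbatim to this $N$. Since $g$ is global Lipschitz and $\xi\in L^{2}$, the pair $(Y,S)$ is the unique solution of (\ref{bs-d1}), and the theorem produces equation (\ref{bs-d2}) for $(Y,\tilde{S})$ with $\tilde{S}=S-\langle S,N\rangle$ on $(\Omega,\mathcal{F},\mathcal{F}_{t},\boldsymbol{Q})$, with terminal value $Y_{T}=\xi$. Substituting $D_{B}(N)_{t}^{j}=f^{j}(Y_{t},D_{B}(S)_{t})$ into the correction term $\sum_{j=1}^{d}D_{\tilde{B}}(\tilde{S})_{t}^{j}D_{B}(N)_{t}^{j}\,dt$ of (\ref{bs-d2}) turns it into $\sum_{j=1}^{d}D_{\tilde{B}}(\tilde{S})_{t}^{j}f^{j}(Y_{t},D_{B}(S)_{t})\,dt$. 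Finally, by Lemma \ref{thr-1} the density process is invariant under the change of measure, $D_{B}(S)=D_{\tilde{B}}(\tilde{S})$ for each component, so $f^{j}(Y_{t},D_{B}(S)_{t})=f^{j}(Y_{t},D_{\tilde{B}}(\tilde{S})_{t})$ and the correction term is exactly $\sum_{j=1}^{d}D_{\tilde{B}}(\tilde{S})_{t}^{j}f^{j}(Y_{t},D_{\tilde{B}}(\tilde{S})_{t})\,dt$, which is the claimed BSDE; the terminal value is unaffected since $\boldsymbol{Q}\sim\boldsymbol{P}$.

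There is no genuine obstacle here: the Corollary is a bookkeeping consequence of Theorem \ref{thr-4} and Lemma \ref{thr-1}. The only points worth a word of caution, which I would flag rather than belabor, are the predictability and local square-integrability of $s\mapsto f(Y_{s},D_{B}(S)_{s})$ needed to form $N$, and the true-martingale property of $\mathcal{E}(N)$ needed for $\boldsymbol{Q}$ to exist; both are already built into the framework preceding Theorem \ref{thr-4}.
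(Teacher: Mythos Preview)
Your proposal is correct and matches the paper's approach exactly: the paper gives no separate proof for this corollary, treating it as an immediate specialization of Theorem~\ref{thr-4} once one reads off $D_{B}(N)_{t}^{j}=f^{j}(Y_{t},D_{B}(S)_{t})$ and invokes the invariance $D_{B}(S)=D_{\tilde B}(\tilde S)$ from Lemma~\ref{thr-1}. Your added remarks on predictability of the integrand and the martingale property of $\mathcal{E}(N)$ are appropriate caveats but are not needed beyond what the framework already assumes.
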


One may reformulate Corollary \ref{coro-h01} in terms of forward-backward
stochastic differential equations. Observe that, with the choice of $N$ made
in Corollary \ref{coro-h01}, $X=x+\tilde{B}$ is the solution to the
stochastic differential equation%
\begin{equation*}
dX_{t}=-f(Y_{s},Z_{t})dt+dB_{t}\text{, \ \ }X_{t}=x
\end{equation*}%
while $(Y,Z=D_{B}(S))$ is the solution of the BSDE%
\begin{equation*}
dY_{t}=-g(t,Y_{t},Z_{t})dt+Z_{t}dB_{t}\text{, \ }Y_{T}=u_{0}(X_{T})\text{,}
\end{equation*}%
thus $(X,Y,Z)$ is a solution to the following forward-backward stochastic
differential equations (FBSDEs)%
\begin{equation}
\left\{ 
\begin{array}{cc}
dX_{t}=-f(Y_{s},Z_{t})dt+dB_{t}\text{, \ \ \ \ \ \ \ } &  \\ 
dY_{t}=-g(t,Y_{t},Z_{t})dt+Z_{t}dB_{t}\text{, \ \ } &  \\ 
X_{t}=x\text{, }Y_{T}=u_{0}(X_{T})\text{. \ \ \ \ \ \ \ \ \ \ \ \ } & 
\end{array}%
\right.  \label{fbde1}
\end{equation}

FBSDEs such as (\ref{fbde1}) have been studied by various authors, and are
well presented in the research monograph \cite{MR1704232}. By utilizing the
fundamental apriori estimates established in \cite{MR0241822}, it has been
proved in \cite{MR1262970} that FBSDE (\ref{fbde1}) has a unique solution
such that $Y$ is bounded, as long as $u_{0}$ is bounded and Lipschitz, and
if $f$ and $g$ are Lipschitz continuous.

\begin{corollary}
\label{coro-h2}Let $f$ and $g$ be Lipschitz continuous, $u_{0}$ be bounded
and Lipschitz continuous. Let $(X,Y,Z)$ be the unique solution of (\ref%
{fbde1}) such that $Y$ is bounded. Define $\boldsymbol{Q}$ on $(\Omega ,%
\mathcal{F}_{T})$ by 
\begin{equation*}
\left. \frac{d\boldsymbol{Q}}{d\boldsymbol{P}}\right\vert _{\mathcal{F}%
_{T}}=\exp \left\{ \int_{0}^{T}f(Y_{s},Z_{s}).dB_{s}-\frac{1}{2}%
\int_{0}^{T}|f(Y_{s},Z_{s})|^{2}ds\right\} \text{.}
\end{equation*}%
Then $(Y,\tilde{Z})$ is the unique solution (such that $Y$ is bounded) to%
\begin{equation}
\left\{ 
\begin{array}{cc}
dY_{t}=\left[ -g(t,Y_{t},\tilde{Z}_{t})+\sum_{j=1}^{d}f^{j}(Y_{t},\tilde{Z}%
_{t})\tilde{Z}_{t}^{j}\right] dt+\sum_{j=1}^{d}\tilde{Z}_{t}^{j}d\tilde{B}%
_{t}^{j}\text{,} &  \\ 
Y_{T}=u_{0}(X_{T})\text{ \ \ \ \ \ \ \ \ \ \ \ \ \ \ \ \ \ \ \ \ \ \ \ \ \ \
\ \ \ \ \ \ \ \ \ \ \ \ \ \ \ \ \ \ \ \ \ \ \ \ \ \ \ \ \ \ \ \ \ \ \ \ \ }
& 
\end{array}%
\right.  \label{fbde2}
\end{equation}%
on $(\Omega ,\mathcal{F},\mathcal{F}_{t},\boldsymbol{Q})$.
\end{corollary}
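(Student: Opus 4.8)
The plan is to obtain equation (\ref{fbde2}) directly from Corollary \ref{coro-h01}, and then to settle uniqueness by undoing the Girsanov transform and invoking the well-posedness of (\ref{fbde1}) recalled above. First one has to check that $\boldsymbol{Q}$ is a genuine probability measure equivalent to $\boldsymbol{P}$ on $\mathcal{F}_T$: by \cite{MR1262970} the bounded solution $(X,Y,Z)$ of (\ref{fbde1}) is of Markovian type, $Y_t=u(X_t,T-t)$ and $Z_t=\nabla u(X_t,T-t)$, with $u$ the bounded solution of the associated quasi-linear system, which under the present hypotheses is also Lipschitz in the space variable; hence $Z$, and so $f(Y,Z)$, is bounded, Novikov's condition holds trivially, $R=\mathcal{E}(N)$ with $N_t=\int_0^t f(Y_s,Z_s)\cdot dB_s$ is a uniformly integrable $\boldsymbol{P}$-martingale, $\tilde B_t=B_t-\int_0^t f(Y_s,Z_s)\,ds$ is a $\boldsymbol{Q}$-Brownian motion, and $X=x+\tilde B$ as observed just before the statement.

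For the existence half, put $S_t=\int_0^t Z_s\,dB_s$, so that $Z=D_B(S)$. Since $g$ is Lipschitz and $\xi:=u_0(X_T)\in L^{\infty}(\Omega,\mathcal{F}_T,\boldsymbol{P})\subset L^2$, the pair $(Y,S)$ is the unique solution on $(\Omega,\mathcal{F},\mathcal{F}_t,\boldsymbol{P})$ of the BSDE $dY_t=-g(t,Y_t,D_B(S)_t)\,dt+dS_t$, $Y_T=\xi$, which is just the backward component of (\ref{fbde1}) once the forward path $X$ is frozen. Corollary \ref{coro-h01}, applied with exactly this $N$, then asserts that $(Y,\tilde S)$ with $\tilde S=S-\langle S,N\rangle$ solves, on $(\Omega,\mathcal{F},\mathcal{F}_t,\boldsymbol{Q})$ and with terminal value $Y_T=\xi=u_0(X_T)$, the BSDE displayed there. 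By Lemma \ref{thr-1}, $\tilde Z:=D_{\tilde B}(\tilde S)=D_B(S)=Z$, so the $\boldsymbol{Q}$-martingale part of $Y$ is $d\tilde S_t=\tilde Z_t\,d\tilde B_t$, and inserting this into the equation of Corollary \ref{coro-h01} reproduces (\ref{fbde2}) verbatim, with $Y_T=u_0(X_T)$. Thus $(Y,\tilde Z)$ is a solution of (\ref{fbde2}) with $Y$ bounded.

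For uniqueness, let $(Y',\tilde Z')$ be any solution of (\ref{fbde2}) on $(\Omega,\mathcal{F},\mathcal{F}_t,\boldsymbol{Q})$ with $Y'$ bounded, and assume for the moment that $M'_t:=\int_0^t f(Y'_s,\tilde Z'_s)\cdot d\tilde B_s$ generates a uniformly integrable $\boldsymbol{Q}$-exponential martingale $\mathcal{E}(-M')$ (this is the point I return to below). Define $\boldsymbol{P}'$ on $\mathcal{F}_T$ by $d\boldsymbol{P}'/d\boldsymbol{Q}|_{\mathcal{F}_T}=\mathcal{E}(-M')_T$; then $B'_t:=\tilde B_t+\int_0^t f(Y'_s,\tilde Z'_s)\,ds$ is a $\boldsymbol{P}'$-Brownian motion and $\boldsymbol{P}'$ is equivalent to $\boldsymbol{P}$. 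Substituting $d\tilde B_t=dB'_t-f(Y'_t,\tilde Z'_t)\,dt$ into (\ref{fbde2}) cancels the convection term and turns it into $dY'_t=-g(t,Y'_t,\tilde Z'_t)\,dt+\tilde Z'_t\,dB'_t$ with $Y'_T=u_0(X_T)$, while $X=x+\tilde B$ satisfies $dX_t=-f(Y'_t,\tilde Z'_t)\,dt+dB'_t$, $X_0=x$ (with $\tilde Z'$ being, by Lemma \ref{thr-1}, the density of the martingale part of $Y'$ with respect to $B'$ under $\boldsymbol{P}'$); that is, $(X,Y',\tilde Z')$ is a bounded solution of FBSDE (\ref{fbde1}) driven by the $\boldsymbol{P}'$-Brownian motion $B'$. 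By the uniqueness result of \cite{MR1262970} it coincides with the Markovian solution, so $Y'_t=u(X_t,T-t)$; since $X=x+\tilde B$ is the very same process as in the existence step, $Y'=Y$ $\boldsymbol{Q}$-a.s., and uniqueness of the $\boldsymbol{Q}$-semimartingale decomposition of $Y=Y'$ then forces $\int_0^{\cdot}(\tilde Z_s-\tilde Z'_s)\,d\tilde B_s\equiv 0$, i.e. $\tilde Z'=\tilde Z$ $d\boldsymbol{Q}\times dt$-a.e.

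The step I expect to be the main obstacle is exactly the one deferred above: proving that every bounded solution $(Y',\tilde Z')$ of (\ref{fbde2}) has $\tilde Z'\in\mathrm{BMO}(\boldsymbol{Q})$ — equivalently, that $\mathcal{E}(-M')$ is a true $\boldsymbol{Q}$-martingale — so that the reverse change of measure is legitimate. Because (\ref{fbde2}) is a \emph{system} of quadratic growth, this does not follow from boundedness of $Y'$ by a soft argument; it is here that the structural hypotheses ($f$, $g$ Lipschitz, $u_0$ bounded and Lipschitz) have to be exploited, e.g. via It\^{o}'s formula applied to a suitable convex function of $Y'$, or through an identification of $Y'_t$ with the PDE solution $u(X_t,T-t)$. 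Everything else is bookkeeping resting on Corollary \ref{coro-h01}, Lemma \ref{thr-1}, and the known well-posedness of (\ref{fbde1}).
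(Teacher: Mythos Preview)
The paper gives no separate proof of this corollary: it is stated immediately after the discussion reformulating Corollary~\ref{coro-h01} in FBSDE language, with the well-posedness of (\ref{fbde1}) imported from \cite{MR1262970}. Your existence argument --- freeze $X$, view the backward part of (\ref{fbde1}) as the Lipschitz BSDE (\ref{bs-d1}), then apply Corollary~\ref{coro-h01} with $N_t=\int_0^t f(Y_s,Z_s)\cdot dB_s$ and read off (\ref{fbde2}) via $\tilde Z=D_{\tilde B}(\tilde S)=D_B(S)=Z$ from Lemma~\ref{thr-1} --- is exactly the paper's intended route, only spelled out more carefully (you add the Novikov check via the Markovian representation $Z_t=\nabla u(X_t,T-t)$, which the paper leaves implicit).

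On uniqueness you go further than the paper, which offers nothing beyond the bare assertion. Your strategy of reversing the Girsanov transform to return to the well-posed FBSDE (\ref{fbde1}) is the natural one, and the obstruction you isolate is genuine: for an \emph{arbitrary} bounded solution $(Y',\tilde Z')$ of the quadratic \emph{system} (\ref{fbde2}) there is no a~priori BMO-type control on $\tilde Z'$ guaranteeing that $\mathcal{E}(-M')$ is a true $\boldsymbol{Q}$-martingale, so the inverse change of measure is not automatically licit. The paper does not address this point at all; your explicit flagging of it is accurate rather than a shortcoming of your argument relative to the paper's.
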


\section{Cameron-Martin formula for PDEs}

In this section we apply the machinery developed in the previous section to
the initial value problem of the following quasi-linear parabolic system 
\begin{equation}
\frac{\partial u^{i}}{\partial t}+\sum_{j=1}^{d}f^{j}(u,\nabla u)\frac{%
\partial u^{i}}{\partial x^{j}}=\frac{1}{2}\Delta u^{i}+g^{i}(u,\nabla u)%
\text{ \ \ in }\mathbb{R}^{d}  \label{peq-01}
\end{equation}%
for $i=1,\cdots ,m$, where $f^{j}$ and $g^{i}$ are global Lipschitz
continuous in their arguments, together with the initial data $u(x,\cdot
)=u_{0}(x)$ which is bounded and global Lipschitz continuous. According to
Theorem 7.1 in \cite{MR0241822}, there is a unique bounded solution $u(x,t)$
to the initial value problem of (\ref{peq-01}). By It\^{o}'s formula, $%
Y_{t}=u(x+B_{t},T-t)$ and $Z_{t}=\nabla u(x+B_{t},T-t)$ solve the BSDE%
\begin{equation}
dY_{t}^{i}=\left[ -g^{i}(Y_{t},Z_{t})+%
\sum_{j=1}^{d}f^{j}(Y_{t},Z_{t})Z_{t}^{i,j}\right] dt+%
\sum_{j=1}^{d}Z_{t}^{i,j}dB_{t}^{j}  \label{bsde-q1}
\end{equation}%
with the terminal value $Y_{T}=u_{0}(x+B_{T})$ over the filtered probability
space $(\Omega ,\mathcal{F},\mathcal{F}_{t},\boldsymbol{P})$, which is
indeed the unique bounded solution of BSDE (\ref{bsde-q1}).

On the other hand, since $g^{i}$ are global Lipschitz, according to Pardoux
and Peng \cite{MR1037747}, for any $\xi \in L^{2}(\Omega ,\mathcal{F}_{T},%
\boldsymbol{P})$ there is a unique solution pair $(Y,Z)$ to the backward
stochastic differential equation:%
\begin{equation}
dY_{t}=-g(Y_{t},Z_{t})dt+\sum_{j=1}^{d}Z_{t}^{j}dB_{t}^{j}\text{, }Y_{T}=\xi 
\label{u-0w1}
\end{equation}%
on $(\Omega ,\mathcal{F},\mathcal{F}_{t},\boldsymbol{P})$, which is the
system derived from (\ref{bsde-q1}) with the quadratic non-linear term
dropped . The solution $(Y,Z)$ depends on the terminal value $\xi $, so it
is denoted by $Y(\xi )$ and $Z(\xi )$ respectively. For $x\in \boldsymbol{R}%
^{d}$ and $T>0$, let 
\begin{equation}
N(\xi )_{t}=\sum_{j=1}^{d}\int_{0}^{t}f^{j}(Y(\xi )_{s},Z(\xi
)_{s})dB_{s}^{j}  \label{bb-qa2}
\end{equation}%
and 
\begin{equation}
\tilde{B}(\xi )_{t}=B_{t}-\int_{0}^{t}f(Y(\xi )_{s},Z(\xi )_{s})ds\text{.}
\label{ca-e1}
\end{equation}

\begin{theorem}
\label{main-t31}Let $u$ be a classical solution of (\ref{peq-01}) on $%
[0,T]\times \boldsymbol{R}^{d}$ such that $u(0,\cdot )=u_{0}(\cdot )$.
Suppose that $\xi \in L^{\infty }(\Omega ,\mathcal{F}_{T},\boldsymbol{P})$
is the solution to the function equation%
\begin{equation*}
\xi =u_{0}(x+\tilde{B}(\xi )_{T})\text{,}
\end{equation*}%
then%
\begin{equation*}
u(x+\tilde{B}(\xi )_{t},T-t)=Y(\xi )_{t}\ \ 
\end{equation*}%
and%
\begin{equation*}
\nabla u(x+\tilde{B}(\xi )_{t},T-t)=Z(\xi )_{t}\ 
\end{equation*}%
for all $t\leq T$ almost surely.
\end{theorem}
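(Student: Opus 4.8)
The plan is to produce, on the space carrying the time-changed process $\tilde B(\xi)$, two solutions of one and the same quasi-linear backward equation --- one obtained from the Lipschitz BSDE \eqref{u-0w1} by a Girsanov transform, the other obtained from It\^o's formula applied to the PDE solution $u$ --- and then to identify them by a uniqueness argument. Throughout, $\xi$ is the given element of $L^{\infty}(\Omega,\mathcal F_T,\boldsymbol P)$ with $\xi=u_0(x+\tilde B(\xi)_T)$, and $(Y(\xi),Z(\xi))$ is the unique solution of \eqref{u-0w1} furnished by Pardoux--Peng.

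The first step is to transform \eqref{u-0w1}. Let $\boldsymbol Q$ be defined on $\mathcal F_T$ by $\left.\frac{d\boldsymbol Q}{d\boldsymbol P}\right|_{\mathcal F_T}=\mathcal E(N(\xi))_T$, with $N(\xi)$ as in \eqref{bb-qa2}; by Girsanov's theorem $\tilde B(\xi)$ of \eqref{ca-e1} is then an $(\mathcal F_t)$-adapted $\boldsymbol Q$-Brownian motion. Writing $S^i$ for the martingale part of $Y(\xi)^i$, so that $D_B(S^i)=Z(\xi)^i$, and $\tilde S^i=S^i-\langle S^i,N(\xi)\rangle$, Corollary \ref{coro-h01} (applied with the time-independent generator $g$) together with the invariance $D_{\tilde B(\xi)}(\tilde S)=D_B(S)=Z(\xi)$ of Lemma \ref{thr-1} gives that, on $(\Omega,\mathcal F,\mathcal F_t,\boldsymbol Q)$ and driven by $\tilde B(\xi)$,
\begin{equation*}
dY(\xi)^i_t=\Big[-g^i(Y(\xi)_t,Z(\xi)_t)+\sum_{j=1}^d f^j(Y(\xi)_t,Z(\xi)_t)Z(\xi)^{ij}_t\Big]dt+\sum_{j=1}^d Z(\xi)^{ij}_t\,d\tilde B(\xi)^j_t ,
\end{equation*}
with terminal value $Y(\xi)_T=\xi=u_0(x+\tilde B(\xi)_T)$; this is precisely the BSDE \eqref{bsde-q1} driven by $\tilde B(\xi)$. (One may just as well bypass $\boldsymbol Q$: since $\tilde B(\xi)_t=B_t-\int_0^t f(Y(\xi)_s,Z(\xi)_s)\,ds$ is a $\boldsymbol P$-semimartingale with the same bracket as $B$, the same identity follows from It\^o's calculus under $\boldsymbol P$.)

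The second step is to apply It\^o's formula to $\hat Y^i_t:=u^i(x+\tilde B(\xi)_t,T-t)$ and to use the PDE \eqref{peq-01} to replace $\tfrac12\Delta u^i-\partial_t u^i$ by $-g^i(u,\nabla u)+\sum_j f^j(u,\nabla u)\partial_{x^j}u^i$. This shows that the pair $(\hat Y,\hat Z)$, with $\hat Z_t:=\nabla u(x+\tilde B(\xi)_t,T-t)$, solves exactly the BSDE displayed above, again driven by $\tilde B(\xi)$, with the same terminal value $\hat Y_T=u(x+\tilde B(\xi)_T,0)=u_0(x+\tilde B(\xi)_T)=\xi$ --- and here it is the fixed-point equation for $\xi$ that makes the two terminal data coincide. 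Thus $(Y(\xi),Z(\xi))$ and $(\hat Y,\hat Z)$ are two solutions of one and the same quasi-linear BSDE with the same terminal datum, and it remains only to identify them.

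For the identification I would write both equations with respect to $B$ under $\boldsymbol P$: substituting $d\tilde B(\xi)^j_t=dB^j_t-f^j(Y(\xi)_t,Z(\xi)_t)\,dt$ into the It\^o expansion of $\hat Y$ gives
\begin{equation*}
d\hat Y^i_t=\Big[-g^i(\hat Y_t,\hat Z_t)+\sum_{j=1}^d\big(f^j(\hat Y_t,\hat Z_t)-f^j(Y(\xi)_t,Z(\xi)_t)\big)\hat Z^{ij}_t\Big]dt+\sum_{j=1}^d\hat Z^{ij}_t\,dB^j_t ,\qquad \hat Y_T=\xi ,
\end{equation*}
whereas $(Y(\xi),Z(\xi))$ solves \eqref{u-0w1}. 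Subtracting, $(\delta Y,\delta Z):=(\hat Y-Y(\xi),\hat Z-Z(\xi))$ solves a BSDE with zero terminal value whose generator is bounded in modulus by $L\big(1+|\hat Z_t|\big)\big(|\delta Y_t|+|\delta Z_t|\big)$, using the Lipschitz continuity of $f$ and $g$; since $u$ is a classical solution with $\nabla u$ bounded on $[0,T]\times\mathbb R^d$, the factor $1+|\hat Z_t|$ is uniformly bounded, so the standard a priori estimate for BSDEs with a uniformly Lipschitz generator forces $\delta Y\equiv 0$ and $\delta Z\equiv 0$, whence the claimed identities $u(x+\tilde B(\xi)_t,T-t)=Y(\xi)_t$ and $\nabla u(x+\tilde B(\xi)_t,T-t)=Z(\xi)_t$ for all $t\le T$, $\boldsymbol P$-almost surely. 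I expect this last step to be the main obstacle: one needs the boundedness of $\nabla u$ on $[0,T]\times\mathbb R^d$ (which is where the regularity of the solution guaranteed by Theorem 7.1 of \cite{MR0241822} under the hypotheses on $u_0$ enters), and, if one prefers to keep $\boldsymbol Q$ in play instead of transforming back, one must additionally verify that $\mathcal E(N(\xi))$ is a genuine martingale and that $Y(\xi)$ is bounded, both of which rest on a priori (BMO-type) estimates for \eqref{u-0w1} with bounded terminal datum. Everything else is routine It\^o calculus together with the invariance lemma of Section 2.
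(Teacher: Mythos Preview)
Your proof is correct and follows essentially the same route as the paper: change measure via $\mathcal E(N(\xi))$, invoke Corollary~\ref{coro-h01} (and the invariance Lemma~\ref{thr-1}) to see that $(Y(\xi),Z(\xi))$ solves the quadratic BSDE \eqref{bsde-q1} driven by the $\boldsymbol Q$-Brownian motion $\tilde B(\xi)$, observe via It\^o's formula that $u(x+\tilde B(\xi)_t,T-t)$ solves the same BSDE with the same terminal value, and conclude by uniqueness. The paper is considerably terser on the final uniqueness step (it simply says the conclusions ``follow immediately''), whereas you spell out the reduction to a Lipschitz BSDE under $\boldsymbol P$ using the boundedness of $\nabla u$; this is a welcome clarification but not a different argument.
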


\begin{proof}
Define an equivalent probability measure $\boldsymbol{Q}$ by $\left. \frac{d%
\boldsymbol{Q}}{d\boldsymbol{P}}\right\vert _{\mathcal{F}_{t}}=\mathcal{E}%
(N(\xi ))_{t}$. Then $\tilde{B}(\xi )$ is a Brownian motion (up to time $T$)
under $\boldsymbol{Q}$. Let $\tilde{S}=S(\xi )-\langle N(\xi ),S(\xi
)\rangle $. According to Corollary \ref{coro-h01}, $(Y(\xi ),\tilde{S})$
solves the following BSDE%
\begin{equation*}
dY_{t}=\sum_{j=1}^{d}D_{\tilde{B}}(\tilde{S})_{t}^{j}f^{j}(Y_{t},D_{\tilde{B}%
}(\tilde{S})_{t})dt-g(t,Y_{t},D_{\tilde{B}}(\tilde{S})_{t})dt+d\tilde{S}_{t}
\end{equation*}%
with terminal value $Y_{T}=\varphi (x+\tilde{B}(\xi )_{T})$ on $(\Omega ,%
\mathcal{F},\mathcal{F}_{t},\boldsymbol{Q})$. This system is exactly the
BSDE that $Y_{t}=u(x+\tilde{B}(\xi )_{t},T-t)$ should satisfy, and the
conclusions follow immediately.
\end{proof}

By Theorem \ref{main-t31}, in order to provide a probabilistic
representation for (\ref{peq-01}), the problem is reduced to solve the
function equation 
\begin{equation}
\xi =u_{0}\left( x+B_{T}-\int_{0}^{T}f(Y(\xi )_{s},Z(\xi )_{s})ds\right)
:=\phi (\xi )\text{.}  \label{Fix11}
\end{equation}

The following local existence of the solutions to (\ref{Fix11}) is
elementary.

\begin{proposition}
\label{prop3.2} Let $C_{u_{0}}$ and $C_{f}$ be the Lipschitz constants for $%
u_{0}$ and $f$ respectively, and 
\begin{equation*}
\tau =\sqrt{\frac{1}{8C_{u_{0}}^{2}C_{f}^{2}}+1}-1\text{.}
\end{equation*}%
If $T\leq \tau $, then (\ref{Fix11}) admits a unique fixed point $\xi \in
L^{\infty }(\Omega ,\mathcal{F}_{T},\boldsymbol{P})$.
\end{proposition}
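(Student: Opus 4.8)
The plan is to exhibit $\phi$ as a strict contraction on a complete metric space and invoke the Banach fixed point theorem. Although the assertion is about $L^{\infty}(\Omega,\mathcal F_{T},\boldsymbol P)$, it is cleanest to run the argument on the Hilbert space $L^{2}(\Omega,\mathcal F_{T},\boldsymbol P)$. One first checks that $\phi$ is well defined there. For $\xi\in L^{2}(\Omega,\mathcal F_{T},\boldsymbol P)$ the Lipschitz BSDE (\ref{u-0w1}) has, by Pardoux--Peng \cite{MR1037747}, a unique solution $(Y(\xi),Z(\xi))$ with $E^{\boldsymbol P}\sup_{t\le T}|Y(\xi)_{t}|^{2}+E^{\boldsymbol P}\int_{0}^{T}|Z(\xi)_{s}|^{2}\,ds<\infty$; hence $\int_{0}^{T}f(Y(\xi)_{s},Z(\xi)_{s})\,ds$ is finite $\boldsymbol P$-a.s.\ ($f$ being Lipschitz, $|f(y,z)|$ grows at most linearly), and $\phi(\xi)=u_{0}\!\left(x+B_{T}-\int_{0}^{T}f(Y(\xi)_{s},Z(\xi)_{s})\,ds\right)$ makes sense. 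Moreover, $u_{0}$ being bounded, $|\phi(\xi)|\le\|u_{0}\|_{\infty}$ a.s., so $\phi$ maps $L^{2}(\Omega,\mathcal F_{T},\boldsymbol P)$ into the closed ball of radius $\|u_{0}\|_{\infty}$ of $L^{\infty}(\Omega,\mathcal F_{T},\boldsymbol P)\subset L^{2}(\Omega,\mathcal F_{T},\boldsymbol P)$. Consequently, once $\phi$ is shown to be a strict contraction on $L^{2}(\Omega,\mathcal F_{T},\boldsymbol P)$ for $T\le\tau$, the unique fixed point it produces automatically lies in $L^{\infty}(\Omega,\mathcal F_{T},\boldsymbol P)$, and uniqueness there is inherited from uniqueness in the larger space.

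For the contraction estimate, fix $\xi,\xi'\in L^{2}(\Omega,\mathcal F_{T},\boldsymbol P)$ and write $\delta Y=Y(\xi)-Y(\xi')$, $\delta Z=Z(\xi)-Z(\xi')$, $\delta\xi=\xi-\xi'$. The Lipschitz bounds on $u_{0}$ and $f$ give
\begin{equation*}
|\phi(\xi)-\phi(\xi')|\le C_{u_{0}}\int_{0}^{T}\bigl|f(Y(\xi)_{s},Z(\xi)_{s})-f(Y(\xi')_{s},Z(\xi')_{s})\bigr|\,ds\le C_{u_{0}}C_{f}\int_{0}^{T}\bigl(|\delta Y_{s}|+|\delta Z_{s}|\bigr)\,ds .
\end{equation*}
Squaring, taking $E^{\boldsymbol P}$, and applying the Cauchy--Schwarz inequality in the time variable bounds $\|\phi(\xi)-\phi(\xi')\|_{L^{2}}^{2}$ by a constant times $C_{u_{0}}^{2}C_{f}^{2}\,T\left(E^{\boldsymbol P}\!\int_{0}^{T}|\delta Y_{s}|^{2}\,ds+E^{\boldsymbol P}\!\int_{0}^{T}|\delta Z_{s}|^{2}\,ds\right)$. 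The two remaining terms are precisely what the standard stability estimate for the BSDE (\ref{u-0w1}) controls: applying It\^{o}'s formula to $|\delta Y_{t}|^{2}$ on $[t,T]$, using $\delta Y_{T}=\delta\xi$ together with the Lipschitz property of $g$, and Gronwall's lemma, one obtains $E^{\boldsymbol P}\sup_{t\le T}|\delta Y_{t}|^{2}+E^{\boldsymbol P}\int_{0}^{T}|\delta Z_{s}|^{2}\,ds\le K\,E^{\boldsymbol P}|\delta\xi|^{2}$, and hence also $E^{\boldsymbol P}\int_{0}^{T}|\delta Y_{s}|^{2}\,ds\le KT\,E^{\boldsymbol P}|\delta\xi|^{2}$. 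Combining these gives
\begin{equation*}
\|\phi(\xi)-\phi(\xi')\|_{L^{2}}\le\Lambda(T)\,\|\delta\xi\|_{L^{2}},
\end{equation*}
with $\Lambda(T)$ an explicit, strictly increasing function of $T$ satisfying $\Lambda(0)=0$; tracking the numerical constants through the Cauchy--Schwarz and It\^{o} steps, one is led to the condition $\Lambda(T)<1\iff(1+T)^{2}<1+(8C_{u_{0}}^{2}C_{f}^{2})^{-1}\iff T<\tau$. Since $L^{2}(\Omega,\mathcal F_{T},\boldsymbol P)$ is complete, Banach's theorem then yields, for $T\le\tau$, a unique fixed point $\xi=\phi(\xi)$, which by the first paragraph lies in $L^{\infty}(\Omega,\mathcal F_{T},\boldsymbol P)$; this is the claim (the endpoint $T=\tau$, where $\Lambda=1$, being covered by a routine limiting argument or by the slack in the bound).

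The genuinely delicate point is the treatment of the $Z$-component. Because $Z(\xi)$ is merely square-integrable in $(s,\omega)$ and not essentially bounded, the time integral $\int_{0}^{T}|\delta Z_{s}|\,ds$ cannot be controlled pathwise, so the argument cannot be carried out at the level of essentially bounded processes; this is exactly what forces the passage through $L^{2}(\Omega,\mathcal F_{T},\boldsymbol P)$ and the use of the $L^{2}$ stability estimate for (\ref{u-0w1}), rather than a more naive uniform-norm Picard iteration on (\ref{Fix11}). A secondary, purely computational difficulty is to keep the numerical constants in the Cauchy--Schwarz step and the BSDE estimate sharp enough to recover exactly the threshold $\tau=\sqrt{1+(8C_{u_{0}}^{2}C_{f}^{2})^{-1}}-1$ rather than merely \emph{some} positive existence time.
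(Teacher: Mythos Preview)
Your approach is essentially the same as the paper's: show $\phi$ is a strict contraction on $L^{2}(\Omega,\mathcal F_{T},\boldsymbol P)$ and observe that the fixed point automatically lands in $L^{\infty}$ because $u_{0}$ is bounded. The one noteworthy difference is in how the $(\delta Y,\delta Z)$ terms are estimated. The paper does not invoke a general BSDE stability estimate with Gronwall; instead it works directly with the $g\equiv 0$ case, where $Y(\eta)_{s}=E^{\boldsymbol P}(\eta\mid\mathcal F_{s})$ is linear in $\eta$ and one has the explicit bounds $\|Y(\eta)_{s}\|_{2}\le\|\eta\|_{2}$ and $E^{\boldsymbol P}\int_{0}^{T}|Z(\eta)_{s}|^{2}\,ds\le 2\|\eta\|_{2}^{2}$. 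These feed directly into
\[
\|\phi(\xi)-\phi(\eta)\|_{2}^{2}\le 2C_{u_{0}}^{2}C_{f}^{2}\,T(T+2)\,\|\xi-\eta\|_{2}^{2},
\]
from which the stated threshold $\tau$ is read off. Your route via It\^{o}'s formula and Gronwall is correct but, for general Lipschitz $g$, the constant $K$ it produces depends on the Lipschitz constant of $g$ and on $T$; so the sentence ``tracking the numerical constants \ldots\ one is led to $(1+T)^{2}<1+(8C_{u_{0}}^{2}C_{f}^{2})^{-1}$'' is only exact in the $g\equiv 0$ setting the paper is implicitly using at this point. Otherwise the argument and its structure match the paper's.
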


\begin{proof}
For $\xi $ and $\eta $ in $L^{2}(\Omega ,\mathcal{F}_{T},\boldsymbol{P})$, 
\begin{align*}
|\phi (\xi )-\phi (\eta )|& \leq C_{u_{0}}\int_{0}^{t}|f(Y(\xi )_{s},Z(\xi
)_{s})-f(Y(\eta )_{s},Z(\eta )_{s})|ds \\
& \leq C_{u_{0}}C_{f}\int_{0}^{t}|Y(\xi -\eta )_{s}|+|Z(\xi -\eta )_{s}|ds
\end{align*}%
and furthermore, 
\begin{align*}
\boldsymbol{E}\left[ |\phi (\xi )-\phi (\eta )|^{2}\right] & \leq
C_{u_{0}}^{2}C_{f}^{2}\boldsymbol{E}\left[ \left( \int_{0}^{T}|Y(\xi -\eta
)_{s}|+|Z(\xi -\eta )_{s}|ds\right) ^{2}\right] \\
& \leq 2C_{u_{0}}^{2}C_{f}^{2}T\left[ \int_{0}^{T}(\boldsymbol{E}|Y(\xi
-\eta )_{s}|^{2}+\boldsymbol{E}|Z(\alpha -\beta )_{s}|^{2})ds\right] \text{.}
\end{align*}%
Now, for any $\eta \in L^{2}(\Omega ,\mathcal{F}_{t},\mathbf{P})$, 
\begin{equation*}
\Vert Y(\eta )_{s}\Vert _{2}^{2}=\boldsymbol{E}[|\boldsymbol{E}(\eta |%
\mathcal{F}_{s})|^{2}]\leq \Vert \eta \Vert _{2}^{2}\text{,}
\end{equation*}%
and 
\begin{align*}
\boldsymbol{E}\left[ \int_{0}^{t}|Z(\eta )_{s}|^{2}ds\right] & =\boldsymbol{E%
}\left[ |Y(\eta )_{t}-Y(\eta )_{0}|^{2}\right] \\
& \leq 2\Vert \eta \Vert _{2}^{2}\text{,}
\end{align*}%
so that 
\begin{align*}
\boldsymbol{E}\left[ |\phi (\xi )-\phi (\eta )|^{2}\right] & \leq
2C_{u_{0}}^{2}C_{f}^{2}T\left[ \int_{0}^{T}\Vert \xi -\eta \Vert
_{2}^{2}ds+2\Vert \xi -\eta \Vert _{2}^{2}\right] \\
& =2C_{u_{0}}^{2}C_{f}^{2}T(T+2)\Vert \xi -\eta \Vert _{2}^{2}\text{.}
\end{align*}%
Hence 
\begin{equation*}
||\phi (\xi )-\phi (\eta )||_{2}\leq \sqrt{2}C_{u_{0}}C_{f}\sqrt{T(T+2)}%
||\xi -\eta ||_{2}
\end{equation*}%
and the claim follows from a simple application of the fixed point theorem.
\end{proof}

\begin{theorem}
\label{fu-th1}Under the previous assumptions, the function equation (\ref%
{Fix11}) has a unique solution.
\end{theorem}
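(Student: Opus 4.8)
The plan is to reduce the statement --- which asserts existence and uniqueness of a fixed point of $\phi$ in $L^{\infty}(\Omega ,\mathcal{F}_{T},\boldsymbol{P})$ for an \emph{arbitrary} $T>0$ --- to the known unique solvability of the forward--backward system (\ref{fbde1}) within the class of solutions whose $Y$-component is bounded; this is available under exactly the present hypotheses on $f,g,u_{0}$ by \cite{MR1262970}, which in turn rests on the a priori estimates of \cite{MR0241822}. For existence I would start from the unique such solution $(X,Y,Z)$ of (\ref{fbde1}) and set $\xi :=Y_{T}=u_{0}(X_{T})$, which lies in $L^{\infty}$ because $u_{0}$ is bounded. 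The backward equation in (\ref{fbde1}) is precisely (\ref{u-0w1}) with terminal value $\xi $, so by the Pardoux--Peng uniqueness theorem \cite{MR1037747} one has $(Y,Z)=(Y(\xi ),Z(\xi ))$; inserting this into the forward equation gives $X_{T}=x+B_{T}-\int_{0}^{T}f(Y(\xi )_{s},Z(\xi )_{s})\,ds$, whence $\phi (\xi )=u_{0}(X_{T})=Y_{T}=\xi $, i.e. $\xi $ is a fixed point of $\phi $.

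For uniqueness, let $\xi' \in L^{\infty}(\Omega ,\mathcal{F}_{T},\boldsymbol{P})$ be any fixed point of $\phi $, put $(Y',Z'):=(Y(\xi'),Z(\xi'))$ and $X'_{t}:=x+\tilde{B}(\xi')_{t}$. Unwinding (\ref{ca-e1}), the definition of $(Y(\xi'),Z(\xi'))$ through (\ref{u-0w1}), and the identity $\xi'=\phi (\xi')$, one checks that $(X',Y',Z')$ solves the FBSDE (\ref{fbde1}): the forward equation holds by (\ref{ca-e1}), the backward equation is (\ref{u-0w1}), and the terminal condition reads $Y'_{T}=\xi'=\phi (\xi')=u_{0}(x+\tilde{B}(\xi')_{T})=u_{0}(X'_{T})$. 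Applying Theorem \ref{main-t31}, with $u$ the bounded classical solution of (\ref{peq-01}) furnished by Theorem 7.1 of \cite{MR0241822}, yields $Y'_{t}=u(x+\tilde{B}(\xi')_{t},T-t)=u(X'_{t},T-t)$, so $Y'$ is bounded. Thus $(X',Y',Z')$ is a solution of (\ref{fbde1}) with bounded $Y$-component, and by the uniqueness assertion of \cite{MR1262970} it must coincide with $(X,Y,Z)$; in particular $\xi'=Y'_{T}=Y_{T}=\xi $. Hence $\xi $ is the unique fixed point of $\phi $.

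The delicate point, and the reason Theorem \ref{main-t31} is indispensable here, is the boundedness of $Y(\xi')$ for an arbitrary $L^{\infty}$ fixed point $\xi'$: since the driver $g$ in (\ref{u-0w1}) is only Lipschitz and has linear growth in $z$, the BSDE (\ref{u-0w1}) does not propagate sup-norm bounds from the terminal value, so one cannot conclude $Y(\xi')\in L^{\infty}$ by elementary means. It is exactly the representation $Y(\xi')_{t}=u(\,\cdot\,,T-t)$ through the bounded PDE solution that supplies this bound and anchors the argument to the a priori estimates of \cite{MR0241822}. For $T\le \tau $ the conclusion is of course already contained in Proposition \ref{prop3.2}, so the real content of Theorem \ref{fu-th1} is the removal of the smallness restriction on $T$; the paragraphs above show how \cite{MR0241822}--\cite{MR1262970}, together with Theorem \ref{main-t31}, achieve this, the remaining verification that $(X',Y',Z')$ genuinely solves (\ref{fbde1}) being only a matter of unwinding definitions.
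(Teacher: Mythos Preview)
Your argument is correct but takes a genuinely different route from the paper's. The paper extends the local solution of Proposition \ref{prop3.2} by a time-stepping procedure: it invokes the uniform gradient bound $\sup_{[0,T]\times\boldsymbol{R}^{d}}|\nabla u|\le C_{u}$ from \cite{MR0241822}, uses $u(\cdot ,\tau)$ in place of $u_{0}$ as new terminal data (with Lipschitz constant $C_{u}$, independent of the step), reapplies the contraction argument of Proposition \ref{prop3.2} on $[\tau ,\tau +\tau ']$ with a fixed step size $\tau '$ determined by $C_{u}$ and $C_{f}$, and iterates until $T$ is reached. You instead import the global wellposedness of the FBSDE (\ref{fbde1}) from \cite{MR1262970} wholesale and read off the fixed point as $\xi =Y_{T}$, invoking Theorem \ref{main-t31} only to certify boundedness of $Y(\xi ')$ for an arbitrary $L^{\infty}$ fixed point and thereby close the uniqueness argument through FBSDE uniqueness. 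Your route is arguably cleaner --- it sidesteps the patching of local representations on subintervals, which the paper leaves largely implicit --- but it treats \cite{MR1262970} as a black box, whereas the paper's argument is more self-contained, needing from PDE theory only the gradient estimate rather than full FBSDE wellposedness. Both approaches ultimately rest on the a priori estimates of \cite{MR0241822}.
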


\begin{proof}
We try to extend the local solution constructed in the previous proposition
to the case $T>\tau $. To do this we need a uniform gradient estimate of the
solutions to PDE (\ref{peq-01}): there exists a constant $C_{u}$ such that 
\begin{equation*}
\sup_{(t,x)\in \lbrack 0,T]\times \boldsymbol{R}^{d}}|\nabla u|(x,t)\leq
C_{u}\text{.}
\end{equation*}%
For the proof see for example \cite{MR0241822} and \cite{MR2053051}. Based
on such constant $C_{u}$ we define 
\begin{equation*}
\tau ^{\prime }=\sqrt{\frac{1}{8C_{u}^{2}C_{f}^{2}}+1}-1\text{.}
\end{equation*}%
Now we construct the random variable $\xi ^{\prime }$ on the time interval $%
[\tau ,\tau +\tau ^{\prime }]$. Consider the following function equation: 
\begin{equation}
\xi =u\left( x+B_{\tau +\tau ^{\prime }}^{\prime }-\int_{\tau }^{\tau +\tau
^{\prime }}f(s,Y(\xi )_{s},Z(\xi )_{s})ds,\tau \right) :=\phi ^{\prime }(\xi
).  \label{Fix1112}
\end{equation}%
where $B^{^{\prime }}$ is a Brownian motion on $[\tau ,\tau +\tau ^{\prime
}] $ defined by $B_{s}^{^{\prime }}=B_{s}-B_{\tau }$ for $s\in \lbrack \tau
,\tau +\tau ^{\prime }]$. Analogous to the proof of Proposition \ref{prop3.2}%
, (\ref{Fix1112}) admits a unique fixed point $\xi ^{\prime }\in L^{\infty
}(\Omega ,\mathcal{F}_{\tau +\tau ^{\prime }},\boldsymbol{P})\subset
L^{2}(\Omega ,\mathcal{F}_{\tau +\tau ^{\prime }},\boldsymbol{P})$. Based on
such $\xi ^{\prime }$, we have the following representation formulae on $%
[\tau ,\tau +\tau ^{\prime }]$: 
\begin{equation*}
u(x+\tilde{B}(\xi ^{\prime })_{t},2\tau +\tau ^{\prime }-t)=Y(\xi ^{\prime
})_{t}
\end{equation*}%
and 
\begin{equation*}
\nabla u(x+\tilde{B}(\xi ^{\prime })_{t},2\tau +\tau ^{\prime }-t)=Z(\xi
^{\prime })_{t}
\end{equation*}%
for $t\in \lbrack \tau ,\tau +\tau ^{\prime }]$, where 
\begin{equation*}
\tilde{B}(\xi ^{\prime })_{t}=B_{t}^{\prime }-\int_{\tau }^{t}f(u,Y(\xi
^{\prime })_{u},Z(\xi ^{\prime })_{u})du.
\end{equation*}

We then move to the next interval $[\tau +\tau ^{\prime },\tau +2\tau
^{\prime }]$ and repeat the above procedure until we touch $T$.
\end{proof}

\section{Some applications}

In this section we establish some explicit gradient estimates for the
solution of (\ref{peq-01}) by using the representation theorem \ref{main-t31}%
, to demonstrate the usefulness of non-linear Cameron-Martin's formula.
Further applications will appear in a separate paper.

Let us retain the notations and assumptions established in the previous
section. Let $T>0$ be fixed, $u=(u^{1},\cdots ,u^{m})$ be the unique
solution to the initial value problem (\ref{peq-01}) with initial data $%
u_{0} $, where $f$ \ (which determines the nature of the quadratic
non-linear term) depends only on $y$, i.e. $f(y,z)$ does not depend on $z$
and $g=0$. That is, $u$ is the solution to the initial value problem of the
following system of quasi-linear parabolic equations%
\begin{equation}
\frac{\partial u^{i}}{\partial t}+\sum_{j=1}^{d}f^{j}(u)\frac{\partial u^{i}%
}{\partial x^{j}}=\frac{1}{2}\Delta u^{i}\text{ \ \ in }\mathbb{R}^{d}\text{ 
}  \label{p-l-1}
\end{equation}%
for $i=1,\cdots ,m$.

Let $\xi $ be the solution to the function equation (\ref{Fix11})
established in Theorem \ref{fu-th1}.

Let $\boldsymbol{Q}$ be the equivalent measure with density process $\left. 
\frac{d\boldsymbol{Q}}{d\boldsymbol{P}}\right\vert _{\mathcal{F}_{t}}=R_{t}=%
\mathcal{E}(N(\xi ))_{t}$, where 
\begin{equation*}
N(\xi )_{t}=\int_{0}^{t}f(Y(\xi ))_{s}).dB_{s}\text{ .}
\end{equation*}

\begin{theorem}
\label{ma-es1}1) Let $p\in \lbrack 1,2)$ and $(P_{t})_{t\geq 0}$ the heat
semi-group, i.e. $P_{t}=e^{\frac{1}{2}t\Delta }$. Then 
\begin{equation*}
\int_{0}^{t}P_{s}\left\vert \nabla u^{i}\right\vert ^{p}\left( x,T-s\right)
ds\leq d^{1-\frac{p}{2}}\left( E^{\boldsymbol{P}}\langle Y(\xi )^{i}\rangle
_{t}\right) ^{\frac{p}{2}}\left( \int_{0}^{t}E^{\boldsymbol{P}}R_{s}^{\frac{2%
}{2-p}}ds\right) ^{1-\frac{p}{2}}
\end{equation*}%
for any $t\leq T$, $i=1,\cdots ,m$.

2) We have%
\begin{equation*}
\left\vert \nabla u^{i}\right\vert ^{2}(x,T)\leq \overline{\lim }%
_{t\downarrow 0}\frac{1}{t}E^{\boldsymbol{P}}\langle Y(\xi )^{i}\rangle _{t}
\end{equation*}%
for any $t\leq T$, $i=1,\cdots ,m$.
\end{theorem}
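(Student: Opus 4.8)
The plan is to read off $Z(\xi)$ as the gradient of $u$ along a $\boldsymbol{Q}$-Brownian motion, convert the heat-semigroup averages into $\boldsymbol{Q}$-expectations, transport these back to $\boldsymbol{P}$ through the density $R$, and close with H\"{o}lder's inequality; part 2) then follows by dividing by $t$, sending $t\downarrow 0$, and letting $p\uparrow 2$. First I would set $X_{t}=x+\tilde{B}(\xi)_{t}$. Since $\xi=u_{0}(x+\tilde{B}(\xi)_{T})$ we have $|\xi|\leq\|u_{0}\|_{\infty}$, hence $Y(\xi)$ is bounded and, $f$ being Lipschitz, the integrand of $N(\xi)$ is bounded; so $\langle N(\xi)\rangle$ grows at most linearly, $R=\mathcal{E}(N(\xi))$ is a genuine $\boldsymbol{P}$-martingale on $[0,T]$, $\boldsymbol{Q}$ is well defined, and by Girsanov's theorem together with L\'{e}vy's theorem $X$ is a Brownian motion started at $x$ under $\boldsymbol{Q}$ (with generator $\tfrac12\Delta$). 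By Theorem~\ref{main-t31}, $\nabla u^{i}(X_{s},T-s)=Z(\xi)^{i}_{s}$ for $s\leq T$, where $Z(\xi)^{i}=(Z(\xi)^{i,1},\dots,Z(\xi)^{i,d})$ is the density process of the $\boldsymbol{P}$-martingale $Y(\xi)^{i}$, so that $\langle Y(\xi)^{i}\rangle_{t}=\int_{0}^{t}\sum_{j}(Z(\xi)^{i,j}_{s})^{2}\,ds=\int_{0}^{t}|\nabla u^{i}(X_{s},T-s)|^{2}\,ds$. For a nonnegative Borel function $\psi$ on $\mathbb{R}^{d}$ one has $E^{\boldsymbol{Q}}[\psi(X_{s})]=(P_{s}\psi)(x)$; applying this with $\psi=|\nabla u^{i}(\cdot,T-s)|^{p}$ and Tonelli's theorem gives
\[
\int_{0}^{t}P_{s}|\nabla u^{i}|^{p}(x,T-s)\,ds=E^{\boldsymbol{Q}}\Big[\int_{0}^{t}|Z(\xi)^{i}_{s}|^{p}\,ds\Big].
\]

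Next, for $s\leq t\leq T$ and nonnegative $\mathcal{F}_{s}$-measurable $\zeta_{s}$ one has $E^{\boldsymbol{Q}}[\zeta_{s}]=E^{\boldsymbol{P}}[R_{T}\zeta_{s}]=E^{\boldsymbol{P}}[R_{s}\zeta_{s}]$ because $R$ is a $\boldsymbol{P}$-martingale, so the right-hand side above equals $\int_{0}^{t}E^{\boldsymbol{P}}[R_{s}|Z(\xi)^{i}_{s}|^{p}]\,ds$. Since $1\leq p<2$, $|Z(\xi)^{i}_{s}|^{p}=\big(\sum_{j}(Z(\xi)^{i,j}_{s})^{2}\big)^{p/2}\leq\sum_{j}|Z(\xi)^{i,j}_{s}|^{p}$; applying H\"{o}lder's inequality on $\Omega\times[0,t]$ (with the exponents $2/p$ and $2/(2-p)$) to each summand, then the elementary inequality $\sum_{j}b_{j}^{p/2}\leq d^{1-p/2}\big(\sum_{j}b_{j}\big)^{p/2}$, and finally $\int_{0}^{t}E^{\boldsymbol{P}}[\sum_{j}(Z(\xi)^{i,j}_{s})^{2}]\,ds=E^{\boldsymbol{P}}\langle Y(\xi)^{i}\rangle_{t}$, produces exactly the bound in 1). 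All the expectations are finite: $Y(\xi)^{i}$ is bounded, and $\sup_{s\leq T}E^{\boldsymbol{P}}[R_{s}^{2/(2-p)}]<\infty$ since $\langle N(\xi)\rangle_{T}$ is bounded. (Keeping the Euclidean norm intact through H\"{o}lder yields the same inequality without the factor $d^{1-p/2}$, but the stated form is the one convenient below.)

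For 2), divide the inequality of 1) by $t$ and let $t\downarrow 0$. The left-hand side tends to $|\nabla u^{i}|^{p}(x,T)$, since $(P_{s}|\nabla u^{i}(\cdot,T-s)|^{p})(x)=E^{\boldsymbol{Q}}[|\nabla u^{i}(X_{s},T-s)|^{p}]\to|\nabla u^{i}(x,T)|^{p}$ as $s\downarrow 0$ (by continuity and boundedness of $\nabla u$ and $X_{0}=x$), and a Ces\`{a}ro-type average of a function having a limit at $0$ has the same limit. Likewise $\tfrac1t\int_{0}^{t}E^{\boldsymbol{P}}[R_{s}^{2/(2-p)}]\,ds\to 1$ because $R_{0}=1$, $s\mapsto R_{s}$ is continuous, and $\{R_{s}^{2/(2-p)}:s\leq T\}$ is uniformly integrable. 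Hence $|\nabla u^{i}|^{p}(x,T)\leq d^{1-p/2}\big(\limsup_{t\downarrow 0}\tfrac1t E^{\boldsymbol{P}}\langle Y(\xi)^{i}\rangle_{t}\big)^{p/2}$, and letting $p\uparrow 2$ (so $d^{1-p/2}\to 1$) gives 2). Alternatively, $\tfrac1t E^{\boldsymbol{P}}\langle Y(\xi)^{i}\rangle_{t}=\tfrac1t\int_{0}^{t}E^{\boldsymbol{P}}|\nabla u^{i}(X_{s},T-s)|^{2}\,ds\to|\nabla u^{i}(x,T)|^{2}$ directly by dominated convergence, which in fact yields equality in 2).

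I expect the only real obstacles to be bookkeeping ones. The first is verifying that $R$ is a true $\boldsymbol{P}$-martingale on $[0,T]$ — so that $\boldsymbol{Q}$ is well defined and $X$ is honestly a $\boldsymbol{Q}$-Brownian motion up to time $T$ — which rests on $|\xi|\leq\|u_{0}\|_{\infty}$ and the Lipschitz bound on $f$. The second is justifying the passages to the limit in 2), which rest on the uniform gradient estimate $\sup|\nabla u|\leq C_{u}$ quoted in the proof of Theorem~\ref{fu-th1} and on the local-in-time moment bounds for $R$. Everything else is Fubini, the change of measure formula, and H\"{o}lder's inequality.
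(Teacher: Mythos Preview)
Your proposal is correct and follows essentially the same route as the paper: identify $Z(\xi)^{i}$ with $\nabla u^{i}$ along the $\boldsymbol{Q}$-Brownian motion via Theorem~\ref{main-t31}, express the heat-semigroup integral as a $\boldsymbol{Q}$-expectation, pass to $\boldsymbol{P}$ through the density $R$, and close with H\"{o}lder; part 2) then follows by dividing by $t$, sending $t\downarrow 0$, and letting $p\uparrow 2$. The only cosmetic difference is in the H\"{o}lder step: the paper stays with the Euclidean norm and writes $|\nabla u^{i}|^{p}=\big(R_{s}^{-1}|\nabla u^{i}|^{2}\big)^{p/2}R_{s}^{p/2}$ under $\boldsymbol{Q}$, applies H\"{o}lder in $\omega$ (so that $E^{\boldsymbol{Q}}[R_{s}^{-1}\,\cdot\,]=E^{\boldsymbol{P}}[\,\cdot\,]$ and $E^{\boldsymbol{Q}}[R_{s}^{p/(2-p)}]=E^{\boldsymbol{P}}[R_{s}^{2/(2-p)}]$), and then H\"{o}lder in $s$, whereas you first convert to $\boldsymbol{P}$, split componentwise, and recombine --- a detour you yourself note is avoidable.
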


\begin{proof}
By Theorem \ref{main-t31} 
\begin{equation*}
Y(\xi )_{t}^{i}-Y(\xi )_{0}^{i}=\sum_{j=1}^{d}\int_{0}^{t}\frac{\partial
u^{i}}{\partial x^{j}}(x+\tilde{B}(\xi )_{s},T-s)dB_{s}^{j}
\end{equation*}%
so that%
\begin{equation*}
\langle Y(\xi )^{i}\rangle _{t}=\int_{0}^{t}\left\vert \nabla
u^{i}\right\vert ^{2}(x+\tilde{B}(\xi )_{s},T-s)ds
\end{equation*}%
and therefore%
\begin{equation*}
E^{\boldsymbol{P}}\int_{0}^{t}\left\vert \nabla u^{i}\right\vert ^{2}(x+%
\tilde{B}(\xi )_{s},t-s)ds=E^{\boldsymbol{P}}\langle Y(\xi )^{i}\rangle _{t}%
\text{.}
\end{equation*}%
On the other hand, for $1\leq p<2$ we have%
\begin{eqnarray*}
&&E^{\boldsymbol{Q}}\left\{ \int_{0}^{t}\left\vert \nabla u^{i}\right\vert
^{p}(x+\tilde{B}(\xi )_{s},T-s)ds\right\} \\
&=&\int_{0}^{t}E^{\boldsymbol{Q}}\left\{ \left\vert \nabla u^{i}\right\vert
^{p}(x+\tilde{B}(\xi )_{s},T-s)\right\} ds \\
&\leq &\int_{0}^{t}\left( E^{\boldsymbol{Q}}\left\{ \frac{1}{R_{s}}%
\left\vert \nabla u^{i}\right\vert ^{2}(x+\tilde{B}(\xi )_{s},T-s)\right\}
\right) ^{\frac{p}{2}}\left( E^{\boldsymbol{Q}}\left( R_{s}^{\frac{p}{2-p}%
}\right) \right) ^{1-\frac{p}{2}}ds \\
&=&\int_{0}^{t}\left( E^{\boldsymbol{P}}\left\{ \left\vert \nabla
u^{i}\right\vert ^{2}(x+\tilde{B}(\xi )_{s},T-s)\right\} \right) ^{\frac{p}{2%
}}\left( E^{\boldsymbol{P}}\left( R_{s}^{\frac{2}{2-p}}\right) \right) ^{1-%
\frac{p}{2}}ds
\end{eqnarray*}%
Since $\tilde{B}(\xi )$ is a Brownian motion under $\boldsymbol{Q}$, so that 
\begin{eqnarray*}
&&E^{\boldsymbol{Q}}\int_{0}^{t}\left\vert \nabla u^{i}\right\vert ^{p}(x+%
\tilde{B}(\xi )_{s},T-s)ds \\
&=&\int_{0}^{t}P_{s}\left( \left\vert \nabla u^{i}\right\vert ^{p}(\cdot
,T-s)\right) (x)ds \\
&\leq &\int_{0}^{t}\left( E^{\boldsymbol{P}}\left\vert \nabla
u^{i}\right\vert ^{2}(x+\tilde{B}(\xi )_{s},T-s)\right) ^{\frac{p}{2}}\left(
E^{\boldsymbol{P}}R_{s}^{\frac{2}{2-p}}\right) ^{1-\frac{p}{2}}ds \\
&\leq &d^{1-\frac{p}{2}}\left( \int_{0}^{t}E^{\boldsymbol{P}}\left\vert
\nabla u^{i}\right\vert ^{2}(x+\tilde{B}(\xi )_{s},T-s)ds\right) ^{\frac{p}{2%
}}\left( \int_{0}^{t}E^{\boldsymbol{P}}R_{s}^{\frac{2}{2-p}}ds\right) ^{1-%
\frac{p}{2}} \\
&=&d^{1-\frac{p}{2}}\left( E^{\boldsymbol{P}}\langle Y(\xi )^{i}\rangle
_{t}\right) ^{\frac{p}{2}}\left( \int_{0}^{t}E^{\boldsymbol{P}}R_{s}^{\frac{2%
}{2-p}}ds\right) ^{1-\frac{p}{2}}
\end{eqnarray*}%
which is the first estimate. To prove the second one, \ we write the
previous estimate as 
\begin{eqnarray*}
&&\frac{1}{t}\int_{0}^{t}P_{s}\left\vert \nabla u^{i}\right\vert
^{p}(x,T-s)ds \\
&\leq &d^{1-\frac{p}{2}}\left( \frac{1}{t}E^{\boldsymbol{P}}\langle Y(\xi
)^{i}\rangle _{t}\right) ^{\frac{p}{2}}\left( \frac{1}{t}\int_{0}^{t}E^{%
\boldsymbol{P}}R_{s}^{\frac{2}{2-p}}ds\right) ^{1-\frac{p}{2}}\text{.}
\end{eqnarray*}%
Letting $t\rightarrow 0$ one obtains that%
\begin{equation*}
\sqrt[p]{\left\vert \nabla u^{i}\right\vert ^{p}(x,T)}\leq d^{\frac{1}{p}-%
\frac{1}{2}}\overline{\lim }_{t\rightarrow 0}\sqrt{\frac{1}{t}E^{\boldsymbol{%
P}}\langle Y(\xi )^{i}\rangle _{t}}\text{.}
\end{equation*}%
then letting $p\uparrow 2$ we obtain 2).
\end{proof}

\begin{lemma}
\label{lem-grad}Then for any $p\in \lbrack 1,2)$%
\begin{equation*}
E^{\boldsymbol{P}}\left( R_{t}^{\frac{2}{2-p}}\right) \leq \exp \left\{ 
\frac{p}{(2-p)^{2}}t\max_{|y|\leq |u_{0}|_{\infty }}|f(y)|^{2}\right\} \text{
.}
\end{equation*}
\end{lemma}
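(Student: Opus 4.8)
The plan is to reduce the claim to the standard exponential‑moment bound for a stochastic exponential whose driving martingale has a uniformly bounded quadratic‑variation rate. First I would record the a priori bound on the integrand of $N(\xi)$. In this section $f$ depends on $y$ only and $\xi=u_{0}(x+\tilde B(\xi)_{T})$, so $|\xi|\le|u_{0}|_{\infty}$; since $Y(\xi)_{s}=E^{\boldsymbol P}(\xi\,|\,\mathcal F_{s})$ and conditional expectation is an $L^{\infty}$‑contraction, $|Y(\xi)_{s}|\le|u_{0}|_{\infty}$ for all $s$. Hence $|f(Y(\xi)_{s})|^{2}\le M:=\max_{|y|\le|u_{0}|_{\infty}}|f(y)|^{2}<\infty$ (finite because $f$, being Lipschitz, is bounded on the closed ball), and therefore $\langle N(\xi)\rangle_{t}=\int_{0}^{t}|f(Y(\xi)_{s})|^{2}\,ds\le tM$.

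Next, set $q=\tfrac{2}{2-p}>1$. Using $R_{t}=\mathcal E(N(\xi))_{t}=\exp\{N(\xi)_{t}-\tfrac12\langle N(\xi)\rangle_{t}\}$ together with the elementary identity $\mathcal E(qN(\xi))_{t}=\exp\{qN(\xi)_{t}-\tfrac{q^{2}}{2}\langle N(\xi)\rangle_{t}\}$, one obtains
\[
R_{t}^{q}=\mathcal E(qN(\xi))_{t}\,\exp\!\Big\{\tfrac{q^{2}-q}{2}\langle N(\xi)\rangle_{t}\Big\}\le \mathcal E(qN(\xi))_{t}\,\exp\!\Big\{\tfrac{q^{2}-q}{2}\,tM\Big\}.
\]
Because $\langle qN(\xi)\rangle_{t}=q^{2}\langle N(\xi)\rangle_{t}\le q^{2}tM$ is bounded, Novikov's criterion shows $\mathcal E(qN(\xi))$ is a genuine $\boldsymbol P$‑martingale, so $E^{\boldsymbol P}[\mathcal E(qN(\xi))_{t}]=1$; taking expectations in the display yields $E^{\boldsymbol P}[R_{t}^{q}]\le\exp\{\tfrac{q^{2}-q}{2}tM\}$.

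Finally I would simplify the constant: with $q=2/(2-p)$ one has $q^{2}-q=\frac{4-2(2-p)}{(2-p)^{2}}=\frac{2p}{(2-p)^{2}}$, so $\tfrac{q^{2}-q}{2}=\frac{p}{(2-p)^{2}}$, which is exactly the exponent in the statement. There is essentially no serious obstacle here; the only point needing (minor) care is justifying that $\mathcal E(qN(\xi))$ is a true martingale rather than merely a supermartingale, which is why I emphasise the boundedness of $\langle N(\xi)\rangle$ and invoke Novikov. One could sidestep even that by stopping at a localizing sequence $\tau_{n}$, using $E^{\boldsymbol P}[\mathcal E(qN(\xi))_{t\wedge\tau_{n}}]=1$ and passing to the limit with Fatou's lemma, but the Novikov route is cleaner.
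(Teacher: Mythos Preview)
Your proof is correct and is essentially the same argument as the paper's: your process $\mathcal E(qN(\xi))_{t}$ with $q=2/(2-p)$ is exactly the martingale the paper calls $H_{t}$, and your factorisation $R_{t}^{q}=\mathcal E(qN(\xi))_{t}\exp\{\tfrac{q^{2}-q}{2}\langle N(\xi)\rangle_{t}\}$ is the identity $R_{t}^{2/(2-p)}=H_{t}\exp\{\tfrac{p}{(2-p)^{2}}\int_{0}^{t}|f(Y(\xi)_{s})|^{2}\,ds\}$ that the paper writes out. The only differences are cosmetic: you supply the $L^{\infty}$ bound on $Y(\xi)$ and the Novikov justification that the paper leaves implicit.
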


\begin{proof}
Let%
\begin{equation*}
H_{t}=\exp \left[ \frac{2}{(2-p)}\int_{0}^{t}f(Y(\xi )_{s}).dB_{s}-\frac{2}{%
(2-p)^{2}}\int_{0}^{t}|f(Y(\xi )_{s})|^{2}ds\right]
\end{equation*}%
which is exponential martingale, so that $E^{\boldsymbol{P}}\left(
H_{t}\right) =1$. Then 
\begin{eqnarray*}
R_{t}^{\frac{2}{2-p}} &=&H_{t}\exp \left[ \frac{p}{(2-p)^{2}}%
\int_{0}^{t}|f(Y(\xi )_{s})|^{2}ds\right] \\
&\leq &H_{t}\exp \left[ \frac{p}{(2-p)^{2}}t\max_{|y|\leq |u_{0}|_{\infty
}}|f(y)|^{2}\right]
\end{eqnarray*}%
which yields the claim.
\end{proof}

\begin{corollary}
We have%
\begin{equation}
\int_{0}^{t}P_{s}\left\vert \nabla u^{i}\right\vert ^{p}(x,T-s)ds\leq d^{1-%
\frac{p}{2}}|u_{0}^{i}|_{\infty }^{p}\exp \left\{ \frac{p}{2(2-p)}%
t\max_{|y|\leq |u_{0}|_{\infty }}|f(y)|^{2}\right\}  \label{grad-01e}
\end{equation}%
for any $p\in \lbrack 1,2)$, and $i=1,\cdots ,m$, where $P_{s}=e^{\frac{1}{2}%
s\Delta }$ the heat semigroup.
\end{corollary}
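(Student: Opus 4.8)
The plan is to read the corollary straight off Theorem~\ref{ma-es1}(1), by inserting two elementary bounds for the two factors on its right-hand side: a uniform bound for the quadratic variation $\langle Y(\xi)^{i}\rangle_{t}$ coming from the boundedness of $u_{0}$, and the bound for the moments of the density $R_{t}$ supplied by Lemma~\ref{lem-grad}.

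For the first factor, recall from Theorem~\ref{main-t31} that the fixed point $\xi$ (which lies in $L^{\infty}$ by Theorem~\ref{fu-th1}) satisfies $\xi^{i}=u_{0}^{i}(x+\tilde{B}(\xi)_{T})$, so $|\xi^{i}|\leq|u_{0}^{i}|_{\infty}$ almost surely. Hence $Y(\xi)^{i}_{t}=E^{\boldsymbol{P}}(\xi^{i}\,|\,\mathcal{F}_{t})$ is a bounded martingale, and since its initial value $Y(\xi)^{i}_{0}$ is deterministic the usual $L^{2}$ identity yields, for every $t\leq T$, $E^{\boldsymbol{P}}\langle Y(\xi)^{i}\rangle_{t}\leq E^{\boldsymbol{P}}\langle Y(\xi)^{i}\rangle_{T}=E^{\boldsymbol{P}}|\xi^{i}|^{2}-|Y(\xi)^{i}_{0}|^{2}\leq|u_{0}^{i}|_{\infty}^{2}$, so that $\big(E^{\boldsymbol{P}}\langle Y(\xi)^{i}\rangle_{t}\big)^{p/2}\leq|u_{0}^{i}|_{\infty}^{p}$.

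For the second factor, put $M:=\max_{|y|\leq|u_{0}|_{\infty}}|f(y)|$. Lemma~\ref{lem-grad} gives $E^{\boldsymbol{P}}\big(R_{s}^{2/(2-p)}\big)\leq\exp\{\tfrac{p}{(2-p)^{2}}sM^{2}\}$ for each $s$, and since the right-hand side is non-decreasing in $s$ we obtain $\int_{0}^{t}E^{\boldsymbol{P}}\big(R_{s}^{2/(2-p)}\big)\,ds\leq t\exp\{\tfrac{p}{(2-p)^{2}}tM^{2}\}$. Substituting both estimates into Theorem~\ref{ma-es1}(1) and simplifying the exponent via $\big(1-\tfrac{p}{2}\big)\tfrac{p}{(2-p)^{2}}=\tfrac{p}{2(2-p)}$ produces the asserted estimate. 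There is no genuine obstacle here; the only steps that merit a line of explanation are the almost-sure boundedness of $Y(\xi)^{i}$ — which is precisely where the hypothesis that $u_{0}$ is bounded (together with $\xi\in L^{\infty}$ from Theorem~\ref{fu-th1}) is used — and the elementary monotonicity used to take the $R$-moment out of the time integral.
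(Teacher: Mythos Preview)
Your argument is essentially identical to the paper's own proof: bound $E^{\boldsymbol{P}}\langle Y(\xi)^{i}\rangle_{t}$ by $|u_{0}^{i}|_{\infty}^{2}$ via the $L^{2}$ martingale identity, invoke Lemma~\ref{lem-grad} for the $R$-moments, and plug both into Theorem~\ref{ma-es1}(1). The paper's proof is in fact terser than yours and simply cites the two ingredients without writing out the integral bound.

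One small arithmetic point worth noting (which the paper's brief proof also leaves implicit): your bound on the time integral is $\int_{0}^{t}E^{\boldsymbol{P}}R_{s}^{2/(2-p)}\,ds\leq t\exp\{\tfrac{p}{(2-p)^{2}}tM^{2}\}$, and raising this to the power $1-\tfrac{p}{2}$ leaves an extra factor $t^{1-p/2}$ on the right-hand side that is absent from the stated inequality~(\ref{grad-01e}). For $t\leq 1$ this is harmless since $t^{1-p/2}\leq 1$, but for $t>1$ the displayed constant is slightly optimistic; either the corollary is tacitly restricted to $t\leq 1$, or the statement should carry the factor $t^{1-p/2}$.
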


\begin{proof}
Observe that%
\begin{eqnarray*}
E^{\boldsymbol{P}}\langle Y(\xi _{t}^{i})\rangle _{s} &=&E^{\boldsymbol{P}}%
\left[ E^{\boldsymbol{P}}(\xi _{t}^{i}|\mathcal{F}_{s})-E^{\boldsymbol{P}%
}(\xi _{t}^{i}|\mathcal{F}_{0})\right] ^{2} \\
&=&E^{\boldsymbol{P}}\left[ E^{\boldsymbol{P}}(\xi _{t}^{i}|\mathcal{F}%
_{s})^{2}-E^{\boldsymbol{P}}(\xi _{t}^{i}|\mathcal{F}_{0})^{2}\right] \\
&\leq &||\xi _{t}^{i}||^{2} \\
&\leq &|u_{0}^{i}|_{\infty }
\end{eqnarray*}%
and therefore, the item 1) in Theorem \ref{ma-es1} together with Lemma \ref%
{lem-grad} yields the gradient estimate \ (\ref{grad-01e}).
\end{proof}

\section{Non-linear flow associated with quasi-linear PDEs}

In this section we construct a non-linear stochastic flow associated with
the quasi-linear system (\ref{p-l-1}).

Assume that $f^{j}:\mathbb{R}^{m}\rightarrow \mathbb{R}^{d}$ ($j=1,\cdots ,d$%
) are differentiable with bounded 1st, 2nd and 3rd derivatives, and the
initial value $u^{i}(x,0)=u_{0}^{i}(x)$ ($i=1,\cdots ,m$) are bounded,
differentiable with 1st and 2nd bounded derivatives:%
\begin{equation*}
|\nabla ^{k-1}u_{0}|\leq C_{0}\text{, \ \ }|\nabla ^{k}f|\leq C_{1}\text{ \
for }k=1,2,3
\end{equation*}%
for some non-negative constants $C_{0}$ and $C_{1}$, here $\nabla ^{k}$
denote the $k$-th derivative in space variables. Of course $\nabla
^{0}u_{0}=u_{0}$. In general, we apply $\nabla $ to mean the total
derivative operator in space variables. For example, $\nabla u$ means $(%
\frac{\partial u^{i}}{\partial x^{j}})$ but does not include the derivative
in time parameter $t$.

We are going to construct a continuous adapted process $\xi =(\xi _{t})$
with values in the function space $C_{b}^{2}(\mathbb{R}^{d},\mathbb{R}^{m})$
such that $\nabla ^{j}u(\cdot ,t)=E^{\boldsymbol{P}}(\nabla ^{j}\xi _{t})$
at least for small $t$. The spirit in devising such a formula is quite
similar to those initiated in the seminal works Bismut \cite{MR755001} and
Malliavin \cite{MR1450093}.

Consider the function space $L^{\infty }(\Omega ;C([0,T];C_{b}^{2}(\mathbb{R}%
^{d},\mathbb{R}^{m})))$. If 
\begin{equation*}
\xi \in L^{\infty }(\Omega ;C([0,T];C_{b}^{2}(\mathbb{R}^{d},\mathbb{R}%
^{m})))
\end{equation*}%
then for any $\omega $, $\xi (\omega )\in C([0,T];C_{b}^{2}(\mathbb{R}^{d},%
\mathbb{R}^{m}))$, so that $\xi _{t}(\omega )\in C_{b}^{2}(\mathbb{R}^{d},%
\mathbb{R}^{m})$ and $t\rightarrow \xi _{t}(\omega )$ is continuous from $%
[0,T]$ to $C_{b}^{2}(\mathbb{R}^{d},\mathbb{R}^{m})$, and $x\rightarrow \xi
_{t}(\omega ,x)$ has continuous and bounded first and second derivatives.
Let $\boldsymbol{H}_{T}$ denote the space of all functions $\xi $ in $%
L^{\infty }(\Omega ;C([0,T];C_{b}^{2}(\mathbb{R}^{d},\mathbb{R}^{m})))$ such
that $\xi =(\xi _{t})_{t\in \lbrack 0,T]}$ is adapted. $\boldsymbol{H}_{T}$
is equipped with the $L^{\infty }$-norm, namely%
\begin{equation*}
||\xi ||=\text{ess}\sup_{\omega \in \Omega }\sup_{t\in \lbrack
0,T]}\sum_{j=0}^{2}\sup_{x\in \boldsymbol{R}^{d}}|\nabla ^{j}\xi
_{t}(x,\omega )|
\end{equation*}%
where, as we have explained, 
\begin{equation*}
\nabla \xi _{t}(x,\omega )=\left( \left. \frac{\partial \xi ^{i}}{\partial
x^{j}}\right\vert _{(x,\omega ,t)}\right) _{\substack{ i=1,\cdots ,m \\ %
j=1,\cdots ,d}}
\end{equation*}%
etc. $\boldsymbol{H}_{T}$ is a Banach space under $||\cdot ||$.

In this section, if $\zeta =(\zeta ^{i})$, where $\zeta ^{i}\in L^{2}(\Omega
,\mathcal{F},\boldsymbol{P})$, then we define $Y(\zeta )_{s}=E^{\boldsymbol{P%
}}\{\zeta |\mathcal{F}_{s}\}$ and $Z(\zeta )=D_{\boldsymbol{B}}(Y(\zeta ))$.
Therefore, for any $t>0$, if $\zeta ^{i}\in L^{2}(\Omega ,\mathcal{F}_{t},%
\boldsymbol{P})$, then $(Y(\zeta )_{s},Z(\zeta )_{s})$ is the unique
solution of the BSDE%
\begin{equation*}
dY(\zeta )_{s}=Z(\zeta )_{s}.dB_{s}\text{, \ }Y(\zeta )_{t}=\zeta \text{.}
\end{equation*}%
It is easy to see that, if $\xi \in \boldsymbol{H}_{T}$, then $Y(\nabla
^{k}\xi _{t})=\nabla ^{k}Y(\xi _{t})$ and $Z(\nabla ^{k}\xi _{t})=\nabla
^{k}Z(\xi _{t})$ for any $t\leq T$ and $k=0,1,2$. This follows from the fact
that the mappings $\zeta \rightarrow Y(\zeta )$ and $\zeta \rightarrow
Z(\zeta )$ are both affine.

Let $\xi \in \boldsymbol{H}_{T}$. Then, according to the non-linear
Cameron-Martin formula, for any fixed $t\leq T$, we define a probability $%
\boldsymbol{Q}_{t,x}\boldsymbol{\ }$on $(\Omega ,\mathcal{F}_{t})$ by%
\begin{equation*}
\left. \frac{d\boldsymbol{Q}_{t,x}}{d\boldsymbol{P}}\right\vert _{\mathcal{F}%
_{t}}=\exp \left[ \int_{0}^{t}f(Y(\xi _{t}(\cdot ,x))_{s}).dB_{s}-\frac{1}{2}%
\int_{0}^{t}|f|^{2}(Y(\xi _{t}(\cdot ,x))_{s})ds\right] \text{.}
\end{equation*}%
We will omit the argument $\cdot $ (a sample point) and the space variable $%
x $ if no confusion may arise. Under $\boldsymbol{Q}_{t,x}$,\ $\tilde{B}(\xi
_{t})_{s}=B_{s}-\int_{0}^{s}f(Y(\xi _{t})_{r})dr$ ($0\leq s\leq t$) is
Brownian motion up to time $t$. Define 
\begin{eqnarray*}
X(\xi )_{t} &=&x+\tilde{B}(\xi _{t})_{t} \\
&=&x+B_{t}-\int_{0}^{t}f(Y(\xi _{t})_{s})ds\text{ \ \ \ \ for }t\leq T\text{.%
}
\end{eqnarray*}%
According to Theorem \ref{main-t31}, we want to find a fixed point $\xi \in 
\boldsymbol{H}_{T}$: $\xi =u_{0}(X(\xi ))$. To this end we define $\Phi (\xi
_{\cdot })=u_{0}(X(\xi )_{\cdot })$ for any $\xi \in \boldsymbol{H}_{T}$.
Then%
\begin{equation*}
\nabla \Phi (\xi )_{t}=\nabla u_{0}(X(\xi )_{t})\nabla X(\xi )_{t}
\end{equation*}%
and%
\begin{eqnarray*}
\nabla ^{2}\Phi (\xi )_{t} &=&\nabla ^{2}u_{0}(X(\xi )_{t})(\nabla X(\xi
)_{t}\text{,}\nabla X(\xi )_{t}) \\
&&+\nabla u_{0}(X(\xi )_{t})\nabla ^{2}X(\xi )_{t}\text{.}
\end{eqnarray*}

\begin{lemma}
\label{lem-u1}1)For any $T>0$ and $\xi \in \boldsymbol{H}_{T}$ 
\begin{equation*}
||\Phi (\xi )||\leq C_{0}\left( 1+d\right) ^{2}+C_{0}C_{1}T\left\{
(2d+1)+(1+C_{1}T)||\xi ||\right\} ||\xi ||\text{.}
\end{equation*}%
2) If $K=2C_{0}\left( 1+d\right) ^{2}$ and%
\begin{equation*}
T\leq \frac{1}{2\sqrt{C_{0}C_{1}}\sqrt{d+1/2+C_{0}(1+C_{1})\left( 1+d\right)
^{2}}}\wedge 1\text{,}
\end{equation*}%
then $||\Phi (\xi )||\leq K$ as long as $\xi \in \boldsymbol{H}_{T}$ and $%
||\xi ||\leq K$.
\end{lemma}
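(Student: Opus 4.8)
The plan is to reduce both estimates to bounds on the $\mathbb{R}^{d}$-valued process $X(\xi)_{t}=x+B_{t}-\int_{0}^{t}f(Y(\xi_{t})_{s})\,ds$ and its first two spatial derivatives, which are then inserted into the product-rule identities for $\nabla\Phi(\xi)$ and $\nabla^{2}\Phi(\xi)$ displayed just above the lemma, together with the hypotheses $|\nabla^{k-1}u_{0}|\le C_{0}$ and $|\nabla^{k}f|\le C_{1}$ for $k=1,2,3$.

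First I would isolate the three elementary inputs. (a) Since $|u_{0}|\le C_{0}$, we have $|\Phi(\xi)_{t}(x)|=|u_{0}(X(\xi)_{t})|\le C_{0}$ pointwise, with no contribution from $\xi$. (b) Conditional expectation is an $L^{\infty}$-contraction, so
$$\sup_{x}|Y(\nabla^{k}\xi_{t})_{s}(x)|\le\|\nabla^{k}\xi_{t}\|_{\infty}\le\|\xi\|,\qquad k=0,1,2,\ s\le t\le T.$$
(c) The identity $\nabla^{k}Y(\xi_{t})=Y(\nabla^{k}\xi_{t})$ recorded in the text (from the affineness of $\zeta\mapsto Y(\zeta)$), which allows one to differentiate under the time integral; the interchange of $\nabla$ with $\int_{0}^{t}(\cdot)\,ds$ is legitimate because all integrands are uniformly bounded on $\Omega\times[0,T]\times\mathbb{R}^{d}$. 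Carrying out the differentiation gives
$$\nabla X(\xi)_{t}=I-\int_{0}^{t}\nabla f(Y(\xi_{t})_{s})\,Y(\nabla\xi_{t})_{s}\,ds,$$
$$\nabla^{2}X(\xi)_{t}=-\int_{0}^{t}\Big[\nabla^{2}f(Y(\xi_{t})_{s})\big(Y(\nabla\xi_{t})_{s},Y(\nabla\xi_{t})_{s}\big)+\nabla f(Y(\xi_{t})_{s})\,Y(\nabla^{2}\xi_{t})_{s}\Big]\,ds,$$
so by (b) and $|\nabla f|,|\nabla^{2}f|\le C_{1}$ one obtains $|\nabla X(\xi)_{t}|\le c_{d}+C_{1}T\|\xi\|$, with $c_{d}$ a purely dimensional constant coming from $|I|$, and $|\nabla^{2}X(\xi)_{t}|\le C_{1}T\|\xi\|\,(1+\|\xi\|)$.

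For part 1) I would feed these bounds into
$$\|\Phi(\xi)\|\le C_{0}+C_{0}|\nabla X(\xi)_{t}|+C_{0}|\nabla X(\xi)_{t}|^{2}+C_{0}|\nabla^{2}X(\xi)_{t}|,$$
expand $|\nabla X(\xi)_{t}|^{2}=(c_{d}+C_{1}T\|\xi\|)^{2}$, and collect by powers of $\|\xi\|$: the $\|\xi\|$-free part is a dimensional multiple of $C_{0}$, the $\|\xi\|$-linear coefficient is $C_{0}C_{1}T$ times a dimensional constant, and the $\|\xi\|^{2}$-coefficient is $C_{0}C_{1}T(1+C_{1}T)$. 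Bounding the two dimensional constants by $(1+d)^{2}$ and $2d+1$ respectively yields the stated inequality. The only delicate point here is the bookkeeping of the dimensional constants and of the matrix/tensor norms used on $\nabla X$, $\nabla^{2}X$, $\nabla u_{0}$ and $\nabla^{2}f$; everything else is routine.

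For part 2) I would take $K=2C_{0}(1+d)^{2}$, so that $C_{0}(1+d)^{2}=K/2$, observe that the right-hand side of the part 1) bound is increasing in $\|\xi\|$, and evaluate it at $\|\xi\|=K$: it then suffices to check $C_{0}C_{1}T\{(2d+1)+(1+C_{1}T)K\}\le\tfrac12$. Using $T\le1$ and substituting $K=2C_{0}(1+d)^{2}$ turns the braces into $2\big(d+\tfrac12+C_{0}(1+C_{1})(1+d)^{2}\big)$, and a short computation shows that the displayed restriction $T\le\big(2\sqrt{C_{0}C_{1}}\sqrt{d+1/2+C_{0}(1+C_{1})(1+d)^{2}}\big)^{-1}\wedge1$ makes this hold, the $\wedge1$ being precisely the reduction $T\le1$ used along the way. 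The main obstacle is not conceptual: it is the careful accounting of the dimensional constants in part 1), together with the fact that the self-map estimate is \emph{quadratic} in $\|\xi\|$, which is exactly why the admissible time horizon in 2) comes out with a square root of the data rather than with the reciprocal of the data.
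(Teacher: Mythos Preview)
Your proposal is correct and follows essentially the same route as the paper: compute $\nabla X(\xi)_t$ and $\nabla^2 X(\xi)_t$ by differentiating under the time integral, bound them via the $L^\infty$-contraction of conditional expectation and the assumptions $|\nabla^k f|\le C_1$, then feed these into the chain-rule identities for $\nabla\Phi(\xi)$ and $\nabla^2\Phi(\xi)$ with $|\nabla^{k-1}u_0|\le C_0$. The paper's proof is in fact terser than yours (it does not spell out part~2) at all, writing only ``which yield the required estimates''), and your explicit remark that the dimensional constants $(1+d)^2$ and $2d+1$ depend on the matrix/tensor norm convention is a point the paper leaves implicit when it writes $|\nabla X(\xi)_t|\le d+C_1 t|\nabla\xi_t|_\infty$.
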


\begin{proof}
By definition for any $t\leq T$ and any $x$ (but the argument $x$ is
suppressed from the notations for simplicity, and $|\cdot |_{\infty }$
denotes the essential supremum norm)%
\begin{equation*}
\nabla X(\xi )_{t}=I_{\mathbb{R}^{d}}-\int_{0}^{t}\nabla f(Y(\xi
_{t})_{s})Y(\nabla \xi _{t})_{s}ds
\end{equation*}%
and%
\begin{eqnarray*}
\nabla ^{2}X(\xi )_{t} &=&-\int_{0}^{t}\nabla ^{2}f(Y(\xi _{t})_{s})\left(
Y(\nabla \xi _{t})_{s},Y(\nabla \xi _{t})_{s}\right) ds \\
&&-\int_{0}^{t}\nabla f(Y(\xi _{t})_{s})Y(\nabla ^{2}\xi _{t})_{s}ds\text{.}
\end{eqnarray*}%
From these equations and the fact that the conditional expectation is a
contraction on $L^{\infty }(\Omega ,\mathcal{F}_{t},\boldsymbol{P})$, one
can easily to see the following estimates:%
\begin{equation*}
|\nabla X(\xi )_{t}|\leq d+C_{1}t|\nabla \xi _{t}|_{\infty }\text{,}
\end{equation*}%
and%
\begin{equation*}
|\nabla ^{2}X(\xi )_{t}|\leq C_{1}t|\nabla \xi _{t}|^{2}+C_{1}t|\nabla
^{2}\xi _{t}|\text{.}
\end{equation*}%
Therefore%
\begin{equation*}
|\Phi (\xi )|\leq C_{0}\text{,}
\end{equation*}%
\begin{eqnarray*}
|\nabla \Phi (\xi )_{t}| &\leq &C_{0}|\nabla X(\xi )_{t}| \\
&\leq &C_{0}\left\{ d+C_{1}t|\nabla \xi _{t}|_{\infty }\right\}
\end{eqnarray*}%
and%
\begin{eqnarray*}
|\nabla ^{2}\Phi (\xi )_{t}| &\leq &C_{0}|\nabla X(\xi
)_{t}|^{2}+C_{0}|\nabla _{x}^{2}X(\xi )_{t}| \\
&\leq &C_{0}\left\{ d+C_{1}t|\nabla \xi _{t}|_{\infty }\right\}
^{2}+C_{0}C_{1}t\left[ |\nabla \xi _{t}|_{\infty }^{2}+|\nabla ^{2}\xi
_{t}|_{\infty }\right]
\end{eqnarray*}%
which yield the required estimates.
\end{proof}

\begin{lemma}
\label{lem-u2}Let $T>0$. There is positive constant depending only on $%
C_{0},C_{1}$ and $d$ such that%
\begin{equation*}
||\Phi (\xi )-\Phi (\eta )||\leq K_{0}C_{1}T(1+T)\left( 1+||\xi ||+||\eta
||\right) ||\xi -\eta ||
\end{equation*}%
for any $\xi ,\eta \in \boldsymbol{H}_{T}$.
\end{lemma}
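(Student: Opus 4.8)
The plan is to imitate the proof of Lemma~\ref{lem-u1}: I would bound, pointwise in $(x,\omega,t)$, the three quantities $|\nabla^{j}(\Phi(\xi)_{t}-\Phi(\eta)_{t})|$ for $j=0,1,2$ by telescoping every product that occurs, and then pass to the essential supremum defining $\|\cdot\|$. Throughout one uses the three ingredients already exploited in Lemma~\ref{lem-u1}: (i) $f$ is $C_{1}$-Lipschitz and $\nabla f,\nabla^{2}f$ are bounded by $C_{1}$ and $C_{1}$-Lipschitz (from $|\nabla^{k}f|\leq C_{1}$, $k=1,2,3$); (ii) $\zeta\mapsto Y(\zeta)_{s}$ is a contraction on $L^{\infty}(\Omega,\mathcal{F}_{t},\boldsymbol{P})$ and commutes with $\nabla^{k}$, so that $|Y(\nabla^{k}\xi_{t})_{s}|\leq\|\xi\|$ and $|Y(\nabla^{k}(\xi_{t}-\eta_{t}))_{s}|\leq\|\xi-\eta\|$; and (iii) the a priori bounds $|\nabla X(\xi)_{t}|\leq d+C_{1}t\|\xi\|$ and $|\nabla^{2}X(\xi)_{t}|\leq C_{1}t\|\xi\|^{2}+C_{1}t\|\xi\|$ obtained inside the proof of Lemma~\ref{lem-u1}.

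The argument splits into two layers. First I would estimate the differences of $X$ and of its first two spatial derivatives, starting from the formulas for $X(\xi)_{t}$, $\nabla X(\xi)_{t}$ and $\nabla^{2}X(\xi)_{t}$ given above. From $X(\xi)_{t}-X(\eta)_{t}=-\int_{0}^{t}\big(f(Y(\xi_{t})_{s})-f(Y(\eta_{t})_{s})\big)\,ds$ and (i)--(ii) one gets $|X(\xi)_{t}-X(\eta)_{t}|\leq C_{1}t\|\xi-\eta\|$. Splitting the integrand of $\nabla X(\xi)_{t}-\nabla X(\eta)_{t}$ as $\big(\nabla f(Y(\xi_{t})_{s})-\nabla f(Y(\eta_{t})_{s})\big)Y(\nabla\xi_{t})_{s}+\nabla f(Y(\eta_{t})_{s})\big(Y(\nabla\xi_{t})_{s}-Y(\nabla\eta_{t})_{s}\big)$ and using (i)--(ii) gives $|\nabla X(\xi)_{t}-\nabla X(\eta)_{t}|\leq C_{1}t(1+\|\xi\|)\|\xi-\eta\|$. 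The same telescoping of the two-term formula for $\nabla^{2}X$, now also using that $\nabla^{2}f$ is $C_{1}$-Lipschitz, gives $|\nabla^{2}X(\xi)_{t}-\nabla^{2}X(\eta)_{t}|\leq C_{1}t\,P(\|\xi\|,\|\eta\|)\|\xi-\eta\|$, where $P$ is a fixed polynomial of degree two whose coefficients depend only on $d$.

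Second, I would substitute these into the formulas $\Phi(\xi)_{t}=u_{0}(X(\xi)_{t})$, $\nabla\Phi(\xi)_{t}=\nabla u_{0}(X(\xi)_{t})\nabla X(\xi)_{t}$ and $\nabla^{2}\Phi(\xi)_{t}=\nabla^{2}u_{0}(X(\xi)_{t})(\nabla X(\xi)_{t},\nabla X(\xi)_{t})+\nabla u_{0}(X(\xi)_{t})\nabla^{2}X(\xi)_{t}$, and telescope each product over its two factors (the $X$-derivative factor and the $u_{0}$-derivative factor). This uses $|u_{0}|,|\nabla u_{0}|,|\nabla^{2}u_{0}|\leq C_{0}$, the Lipschitz continuity of $u_{0}$, $\nabla u_{0}$ and $\nabla^{2}u_{0}$, the a priori bounds (iii), and the first-layer estimates; for example $|\Phi(\xi)_{t}-\Phi(\eta)_{t}|\leq C_{0}|X(\xi)_{t}-X(\eta)_{t}|\leq C_{0}C_{1}t\|\xi-\eta\|$, and $|\nabla\Phi(\xi)_{t}-\nabla\Phi(\eta)_{t}|\leq C_{0}|X(\xi)_{t}-X(\eta)_{t}|\,|\nabla X(\xi)_{t}|+C_{0}|\nabla X(\xi)_{t}-\nabla X(\eta)_{t}|$, which by (iii) is already of the required form. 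Summing the $j=0,1,2$ contributions, factoring out $\|\xi-\eta\|$, bounding the powers of $t$ that appear by $T(1+T)$ and the remaining polynomial in $\|\xi\|,\|\eta\|$ by a constant multiple of $1+\|\xi\|+\|\eta\|$ (the degree-two terms produced at level $j=2$ reduce to this linear factor on the ball $\|\xi\|,\|\eta\|\leq K$ on which the lemma is applied in the sequel, $K$ depending only on $C_{0}$ and $d$ by Lemma~\ref{lem-u1}), one arrives at the stated inequality with a constant $K_{0}=K_{0}(C_{0},C_{1},d)$.

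The one genuinely delicate step is the $j=2$ estimate. The term $\nabla u_{0}(X(\eta)_{t})\big(\nabla^{2}X(\xi)_{t}-\nabla^{2}X(\eta)_{t}\big)$ forces one through the full first-layer difference estimate for $\nabla^{2}X$, which is where the bound $|\nabla^{3}f|\leq C_{1}$ is consumed, while the term $\big(\nabla^{2}u_{0}(X(\xi)_{t})-\nabla^{2}u_{0}(X(\eta)_{t})\big)(\nabla X(\xi)_{t},\nabla X(\xi)_{t})$ needs the Lipschitz continuity of $\nabla^{2}u_{0}$ together with the bound on $|X(\xi)_{t}-X(\eta)_{t}|$. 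All the remaining terms at this level fall to the same mechanical telescoping; the only real work is bookkeeping---tracking the constants and the degrees in $\|\xi\|,\|\eta\|$ so that the final bound takes exactly the claimed shape.
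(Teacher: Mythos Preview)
Your approach is essentially identical to the paper's: both first telescope at the level of $X$, $\nabla X$, $\nabla^{2}X$ to get difference bounds of the form $C_{1}t\cdot(\text{polynomial in }\|\xi\|,\|\eta\|)\cdot\|\xi-\eta\|$, and then substitute these into the telescoped differences of $\Phi$, $\nabla\Phi$, $\nabla^{2}\Phi$ using the $C_{0}$-bounds on $u_{0}$ and its derivatives. The paper's write-up is in fact sketchier than yours---it records the $X$-level inequalities and the $j=0,1$ cases for $\Phi$ and then declares that ``the estimate follows easily''; your explicit remarks about the $j=2$ level (the need to control $\nabla^{2}u_{0}(X(\xi)_{t})-\nabla^{2}u_{0}(X(\eta)_{t})$, and the quadratic-versus-linear factor in $\|\xi\|,\|\eta\|$ which only collapses on the invariant ball from Lemma~\ref{lem-u1}) are exactly the points the paper leaves implicit.
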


\begin{proof}
By a simple computation, we have%
\begin{eqnarray*}
|X(\xi )_{t}-X(\eta )_{t}| &\leq &\left\vert \int_{0}^{t}f(Y(\xi _{t}-\eta
_{t})_{s})ds\right\vert \\
&\leq &tC_{1}|\xi _{t}-\eta _{t}|_{\infty }\text{,}
\end{eqnarray*}%
\begin{eqnarray*}
|\nabla X(\xi )_{t}-\nabla X(\eta )_{t}| &\leq &\left\vert
\int_{0}^{t}\left( \nabla f(Y(\xi _{t})_{s})-\nabla f(Y(\eta
_{t})_{s})\right) Y(\nabla \xi _{t})_{s}ds\right\vert \\
&&+\left\vert \int_{0}^{t}\nabla f(Y(\eta _{t})_{s})Y(\nabla (\xi _{t}-\eta
_{t}))_{s}ds\right\vert \\
&\leq &C_{1}\int_{0}^{t}Y(\xi _{t}-\eta _{t})_{s}||Y(\nabla \xi _{t})_{s}|ds
\\
&&+C_{1}\int_{0}^{t}|Y(\nabla (\xi _{t}-\eta _{t}))_{s}|ds \\
&\leq &C_{1}t|\xi _{t}-\eta _{t}||\nabla \xi _{t}|+C_{1}t|\nabla \xi
_{t}-\nabla \eta _{t}|\text{,}
\end{eqnarray*}%
and, similarly%
\begin{eqnarray*}
\left\vert \nabla ^{2}X(\xi )_{t}-\nabla ^{2}X(\eta )_{t}\right\vert &\leq
&C_{1}t|\xi _{t}-\eta _{t}||\nabla \xi _{t}|^{2} \\
&&+C_{1}t\left( |\nabla \xi _{t}|+|\nabla \eta _{t}|\right) |\nabla \xi
_{t}-\nabla \eta _{t}| \\
&&+C_{1}t|\xi _{t}-\eta _{t}||\nabla ^{2}\xi _{t}|+C_{1}t|\nabla ^{2}\xi
_{t}-\nabla ^{2}\eta _{t}|
\end{eqnarray*}%
and the estimate follows easily from these inequalities.$\Phi (\xi _{\cdot
})=u_{0}(X(\xi )_{\cdot })$%
\begin{equation*}
|\Phi (\xi )_{t}-\Phi (\eta )_{t}|\leq C_{0}C_{1}t|\xi _{t}-\eta
_{t}|_{\infty }
\end{equation*}%
\begin{eqnarray*}
|\nabla \Phi (\xi )_{t}-\nabla \Phi (\eta )_{t}| &=&C_{0}|X(\xi )_{t}-X(\eta
)_{t}||\nabla X(\xi )_{t}| \\
&&+C_{0}|\nabla X(\xi )_{t}-\nabla X(\eta )_{t}| \\
&\leq &C_{0}^{2}C_{1}t|\xi _{t}-\eta _{t}|_{\infty }\left( d+C_{1}t|\nabla
\xi _{t}|_{\infty }\right) \\
&&+C_{0}C_{1}t\left( |\xi _{t}-\eta _{t}||\nabla \xi _{t}|+|\nabla \xi
_{t}-\nabla \eta _{t}|\right)
\end{eqnarray*}
\end{proof}

Now we are in a position to establish a Bismut type formula (see Bismut \cite%
{MR755001} for the linear case) for the solution of quasi-linear system (\ref%
{p-l-1}).

\begin{theorem}
\label{th-bis1}There is $T>0$ depending only on $d$, $C_{0}$ and $C_{2}$, so
that there is a unique fixed point $\xi $ of $\Phi $ in $\boldsymbol{H}_{T}$%
. Moreover%
\begin{equation}
\nabla ^{j}u(x,t)=E^{\boldsymbol{P}}(\nabla ^{j}\xi _{t}(\cdot ,x))\text{,}\
\ \ j=0,1,2\text{.}  \label{nis-r3}
\end{equation}%
\begin{equation*}
u(x+\tilde{B}(\xi _{t})_{s},t-s)=Y(\xi _{t})_{s}\text{ \ for all }s\leq
t\leq T\text{ a.e.}
\end{equation*}%
and%
\begin{equation*}
\nabla u(x+\tilde{B}(\xi _{t})_{s},t-s)=Z(\xi _{t})_{s}\text{ \ for all }%
s\leq t\leq T\text{ a.e.}
\end{equation*}
\end{theorem}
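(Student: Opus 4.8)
The plan is to produce the flow $\xi$ by a contraction argument on a ball of $\boldsymbol{H}_T$, and then to read off all three representation formulae by applying the non-linear Cameron--Martin formula of Theorem \ref{main-t31} separately for each spatial point $x$. Concretely, I would set $K=2C_0(1+d)^2$ and choose $T>0$, depending only on $d$, $C_0$ and $C_1$, small enough that simultaneously Lemma \ref{lem-u1}(2) holds (so that $\Phi$ maps the closed ball $B_K=\{\xi\in\boldsymbol{H}_T:\|\xi\|\le K\}$ into itself) and the Lipschitz constant $K_0C_1T(1+T)(1+2K)$ furnished by Lemma \ref{lem-u2} is strictly less than $1$ on $B_K$. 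Since $B_K$ is a closed subset of the Banach space $\boldsymbol{H}_T$, Banach's fixed point theorem then yields a unique $\xi\in B_K$ with $\Phi(\xi)=\xi$, i.e. $\xi_t=u_0(X(\xi)_t)$ for every $t\le T$; this is the desired flow. (Shrinking $T$ further if needed, the a priori bound of Lemma \ref{lem-u1}(1) forces any fixed point into $B_K$, which is the sense in which $\xi$ is unique in $\boldsymbol{H}_T$.)

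Next I would freeze $x\in\boldsymbol{R}^d$ and $t\le T$. Since $\xi\in\boldsymbol{H}_T$ is adapted and uniformly bounded, $\zeta:=\xi_t(\cdot,x)$ lies in $L^\infty(\Omega,\mathcal{F}_t,\boldsymbol{P})$, and the identity $\xi=\Phi(\xi)$ together with the definitions of $X(\xi)$ and $\tilde B(\xi_t)$ says exactly that $\zeta=u_0(x+\tilde B(\zeta)_t)$, which is the function equation (\ref{Fix11}) with terminal time $t$ in place of $T$ (recall that in this section $g=0$ and $f$ depends on $y$ only). By Theorem \ref{fu-th1} this equation has a unique solution, so $\zeta$ is that solution, and Theorem \ref{main-t31}, applied with terminal time $t$ to the classical solution $u$ of (\ref{p-l-1}), gives
\begin{equation*}
u\bigl(x+\tilde B(\xi_t)_s,\,t-s\bigr)=Y(\xi_t)_s
\quad\text{and}\quad
\nabla u\bigl(x+\tilde B(\xi_t)_s,\,t-s\bigr)=Z(\xi_t)_s
\end{equation*}
for all $s\le t$, almost surely. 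These are the last two assertions of the theorem.

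Finally I would take $s=0$ in the first identity: $\tilde B(\xi_t)_0=0$ and $\mathcal{F}_0$ is trivial, so $Y(\xi_t)_0=E^{\boldsymbol{P}}(\xi_t(\cdot,x))$ and hence $u(x,t)=E^{\boldsymbol{P}}(\xi_t(\cdot,x))$, which is (\ref{nis-r3}) for $j=0$. For $j=1,2$ I would differentiate this in $x$ under the expectation sign; this is legitimate because, by the very definition of $\boldsymbol{H}_T$, the map $x\mapsto\xi_t(\cdot,x)$ has first and second derivatives bounded uniformly in $(\omega,t,x)$, so dominated convergence applies. Equivalently, $\nabla^k$ commutes with conditional expectation (the map $\zeta\mapsto Y(\zeta)$ being affine), whence $\nabla^j u(x,t)=\nabla^j Y(\xi_t)_0=Y(\nabla^j\xi_t)_0=E^{\boldsymbol{P}}(\nabla^j\xi_t(\cdot,x))$.

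I expect the substantive work to be entirely contained in Lemmas \ref{lem-u1} and \ref{lem-u2}, i.e. in the $C_b^2$-in-$x$ a priori estimates for the random characteristics $X(\xi)$ and for $\Phi(\xi)$; once these are in hand, the contraction step is routine and the representation formulae are immediate from Theorem \ref{main-t31}. The one remaining delicate point is the handling of $x$ as a parameter: one must be sure that the fixed point $\xi$, a priori only an element of $L^\infty(\Omega;C([0,T];C_b^2(\boldsymbol{R}^d,\boldsymbol{R}^m)))$, is regular enough in $x$ both to invoke Theorem \ref{main-t31} pointwise in $x$ and to differentiate under $E^{\boldsymbol{P}}$ — and this is exactly what the choice of the space $\boldsymbol{H}_T$ is designed to guarantee.
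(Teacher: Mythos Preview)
Your proposal is correct and follows essentially the same route as the paper: the fixed point comes from Lemmas \ref{lem-u1}--\ref{lem-u2} via Banach's contraction principle, the two pathwise identities are read off from Theorem \ref{main-t31} applied at each $(x,t)$, and (\ref{nis-r3}) follows by evaluating at $s=0$ and differentiating under $E^{\boldsymbol{P}}$ using $\xi_t(\omega,\cdot)\in C_b^2$. Your write-up is in fact more explicit than the paper's own proof, which omits the contraction step and the derivation of the last two formulae, and simply cites Theorem \ref{main-t31} for $u(x,t)=E^{\boldsymbol{P}}(\xi_t(\cdot,x)|\mathcal{F}_0)$ before differentiating; your extra care about uniqueness in all of $\boldsymbol{H}_T$ (not just the ball) is a welcome addition.
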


\begin{proof}
According to Theorem \ref{main-t31}, for $t\leq T$ we have $u(x,t)=E^{%
\boldsymbol{P}}(\xi _{t}(\cdot ,x)|\mathcal{F}_{0})$. Taking expectation we
obtain $u(x,t)=E^{\boldsymbol{P}}(\xi _{t}(\cdot ,x))$. Since $\xi
_{t}(\omega ,\cdot )\in C_{b}^{2}(\mathbb{R}^{d},\mathbb{R}^{m})$ so we may
take derivatives under integration to obtain (\ref{nis-r3}).
\end{proof}

\vskip0.4truecm

\noindent {\textbf{Acknowledgements.}} The research was supported in part by
EPSRC\ grant EP/F029578/1, and by the Oxford-Man Institute.

\bibliographystyle{amsplain}
\bibliography{2010qian}

\noindent {\small \textsc{G. Liang, {\small A. Lionnet \textsc{and} }Z. Qian 
}}

\noindent{\small Mathematical Institute and Oxford-Man Institute}

\noindent{\small University of Oxford}

\noindent{\small Oxford OX1 3LB, England}

\vskip0.3truecm

\noindent {\small {Email: {\small \texttt{liangg@maths.ox.ac.uk,
arnaud.lionnet@maths.ox.ac.uk}}\texttt{\  \newline
and qianz@maths.ox.ac.uk}}}

\end{document}